\documentclass[onefignum,onetabnum]{siamart171218}
\usepackage{amsmath,amsfonts,amssymb}
\usepackage{mathtools}
\usepackage{graphicx}
\ifpdf
  \DeclareGraphicsExtensions{.pdf,.png,.jpg}
\else
  \DeclareGraphicsExtensions{.eps}
\fi

\usepackage{algpseudocode} % 'algorithm' is loaded by siamart171218
\usepackage{booktabs}
\usepackage[caption=false]{subfig}
\usepackage{xcolor}
\usepackage{comment}

\colorlet{siaminlinkcolor}{black}
\colorlet{siamexlinkcolor}{black}
\AtBeginDocument{\hypersetup{
  linkcolor=black,
  citecolor=black,
  urlcolor=black,
  filecolor=black,
  runcolor=black,
  menucolor=black,
  pdfborder={0 0 0}
}}

\newif\ifshowcomments
\showcommentsfalse % set true to display internal notes

\ifshowcomments

\newcommand{\todo}[1]{{\color{red}\textbf{【要追記】} #1}}
\newenvironment{proposed}{\par\begingroup\color{blue}\noindent\textbf{【青字メモ（私案）】}\par}{\par\endgroup}
\else

\newcommand{\todo}[1]{}
\excludecomment{proposed}
\fi

\headers{Iterative Refinement for a Subset of Eigenvectors}{T. Terao et al.}

\title{Iterative Refinement for a Subset of Eigenvectors of Symmetric Matrices via Matrix Multiplications}
\author{Takeshi Terao\thanks{Research Institute for Science and Engineering, Waseda University, 3-4-1 Okubo, Shinjuku-ku, Tokyo 169-8555, Japan (\email{takeshi.terao@aoni.waseda.jp}).}
\and Katsuhisa Ozaki\thanks{Department of Mathematical Sciences, Shibaura Institute of Technology, 307 Fukasaku, Minuma-ku, Saitama, 337-8570, Saitama, Japan. }
\and Toshiyuki Imamura\thanks{RIKEN Center for Computational Science, 7-1-26 Minatojima-minami-machi, Chuo-ku, Kobe, Hyogo 650-0047, Japan. }
\and Takeshi Ogita\thanks{Department of Applied Mathematics, Faculty of Science and Engineering, Waseda University, 3-4-1 Okubo, Shinjuku-ku, Tokyo 169-8555, Japan }
}

\begin{document}

\maketitle

\begin{abstract}
We develop an iterative refinement method that improves the accuracy of a user-chosen subset of $k$ eigenvectors ($k\ll n$) of an $n\times n$ real symmetric matrix.
Using an orthogonal matrix represented in compact WY form, the method expresses the eigenvector error through a correction matrix that can be approximated efficiently from Rayleigh quotients and residuals.
Unlike refinement methods for a single eigenpair or for a full eigenbasis, the proposed method refines only the selected $k$ eigenvectors using $\mathcal{O}(nk)$ additional storage, and its dominant work can be organized as matrix--matrix multiplications.
Under an eigenvalue separation condition, the refinement converges linearly; we also provide a conservative sufficient condition.
Practical variants of the separation condition (e.g., via shifting) enable targeting other extremal parts of the spectrum.
For tightly clustered eigenvalues, we discuss limitations and show that preprocessing can restore convergence in a representative sparse example.
Numerical experiments on dense test matrices and sparse matrices from the SuiteSparse Matrix Collection illustrate attainable accuracy and problem-dependent convergence.
\end{abstract}

\begin{keywords}
iterative refinement, eigenvectors, symmetric eigenvalue problem, mixed precision
\end{keywords}

\begin{AMS}
65F15, 65F35, 65F50
\end{AMS}

\section{Introduction}\label{sec:introduction}

The eigenvalue problem for real symmetric matrices plays a central role in a wide range of scientific and engineering applications. Given a real symmetric matrix $A \in \mathbb{R}^{n \times n}$, one seeks eigenpairs
\[
A x^{(i)} = \lambda_i x^{(i)}, \qquad i = 1,\dots,n,
\]
which are essential in electronic structure calculations, structural analysis, machine learning, graph computations, and many other fields. Modern numerical linear algebra libraries such as LAPACK and ARPACK allow these eigenpairs to be computed in a backward stable manner~\cite{anderson1999lapack,lehoucq1998arpack}. However, classical perturbation theory shows that when eigenvalues are clustered, the accuracy of the corresponding eigenvectors may deteriorate significantly. In practice, it is often necessary to apply techniques that improve the accuracy of the eigenvectors.

To enhance the accuracy of computed eigenpairs, Newton-type iterative refinement methods have been studied for decades. Specifically, the methods of Dongarra--Moler--Wilkinson and Tisseur improve the accuracy of a single eigenpair~\cite{dongarra1983improving,tisseur2001newton}. 
These approaches update one eigenpair at a time by solving a correction equation involving $A-\mu I$. In the dense case, each refinement step can cost $\mathcal{O}(n^3)$, and the correction equation may become ill-conditioned when the target eigenvalue is close to others; consequently, convergence can stagnate or become very slow for clustered eigenvalues. 
See~\cite{tsai2022mixed} for related work on mixed-precision refinement in a similar spirit.

More recently, Ogita and Aishima proposed refinement methods that simultaneously improve all eigenpairs and analyzed their efficiency and convergence~\cite{ogita2018iterative,ogita2019iterative}. 
See also~\cite{shiroma2019tracking,uchino2024high,uchino2022acceleration} for related developments.
Because their framework refines a full eigenbasis, it requires storing and updating all $n$ eigenvectors, which can be impractical in both runtime and memory when only a small subset is needed.
In contrast, our focus is on selectively improving only $k\ll n$ eigenvectors of interest.

More broadly, mixed-precision numerical algorithms have attracted significant attention, with iterative refinement being a canonical example. For surveys, see~\cite{abdelfattah2021survey,higham2022mixed}.

In many practical applications, one is interested not in the full eigenbasis but only in $k \ll n$ eigenvectors.
Classical refinement techniques are typically formulated either for improving a single eigenpair (via a correction equation) or for refining a full eigenbasis; comparatively less attention has been devoted to refinement methods that target only a selected subset while using $\mathcal{O}(nk)$ storage and organizing the dominant work as matrix--matrix multiplications.

In this paper, we propose a new iterative refinement method for a subset of eigenvectors $X = (x^{(p_1)}, \dots, x^{(p_k)}) \in \mathbb{R}^{n \times k}$, where $p$ is a permutation of $\{1,\dots,n\}$ and $p_1,\dots,p_k$ index the targeted eigenvectors.
The proposed framework constructs an orthogonal matrix $H \in \mathbb{R}^{n \times n}$ such that
\[
H^\mathsf{T} X =
\begin{pmatrix}
I_k \\O_{n-k,k}
\end{pmatrix},
\qquad
H^\mathsf{T} A X =
\begin{pmatrix}
D_1 \\ O_{n-k,k}
\end{pmatrix},
	\]
	where $D_1 = \mathrm{diag}(\lambda_{p_1},\dots,\lambda_{p_k})$ and $O_{n-k,k}$ denotes the $(n-k)\times k$ zero block.
Under this representation, the exact and approximate eigenvectors satisfy
\[
X = \widehat{X} + \widehat{H} E_L,
\]
for some (unknown) correction matrix $E_L \in \mathbb{R}^{n \times k}$.
The refinement updates
\[
\widehat{X} \leftarrow \widehat{X} + \widehat{H} \widetilde{E}_L
\]
where $\widetilde{E}_L$ is a computable approximation of $E_L$.

We first present an efficient construction of the orthogonal matrix $H$ using the compact WY representation of Householder transformations and a modified LU factorization, requiring only $\mathcal{O}(nk)$ storage~\cite{ballard2015reconstructing,schreiber1989storage,yamamoto2012aggregation}.
We then derive an algorithm for computing $\widetilde{E}_L$ from Rayleigh quotients and residuals.
Finally, we provide an error-block analysis and show that linear convergence is obtained under the eigenvalue separation condition
\begin{align}
\max_{k+1\le j\le n} |\lambda_{p_j}| < \min_{1\le i\le k} |\lambda_{p_i}|.\label{eq:condition}
\end{align}
If we choose $p$ so that $|\lambda_{p_1}|\ge \cdots \ge |\lambda_{p_n}|$, then \eqref{eq:condition} reduces to $|\lambda_{p_{k+1}}|<|\lambda_{p_k}|$; equivalently, $\min_{1\le i\le k} |\lambda_{p_i}|=|\lambda_{p_k}|$ and $\max_{k+1\le j\le n} |\lambda_{p_j}|=|\lambda_{p_{k+1}}|$.
In particular, the proposed refinement is effective for eigenvectors corresponding to eigenvalues of larger magnitude, while the convergence deteriorates as the target eigenvalues become more tightly clustered. These behaviors are theoretically justified within our framework.

This eigenvalue separation condition implies that the target eigenvectors cannot be chosen completely arbitrarily.
However, since the condition is stated in terms of the ordered eigenvalues $\{\lambda_{p_i}\}$, it admits practically useful variants through different choices of the ordering and through simple transformations such as shifting (or shift-and-square) that make the desired eigenvectors dominant in a transformed problem; see Section~\ref{sec:variants}.
Nevertheless, it covers many application-relevant cases where the eigenvectors associated with dominant modes must be computed with high accuracy.

Moreover, the proposed method is not a subspace projection method.
In particular, it does not compute Ritz pairs via Rayleigh--Ritz on a fixed trial subspace.
Thus, it does not converge to a projected solution for that subspace.
Instead, it aims to converge to eigenvectors of the original matrix~$A$.

Finally, the proposed method aligns well with current hardware trends toward mixed precision and matrix-multiplication-accelerated hardware~\cite{11196413}.
The workflow separates an inexpensive low-precision initial approximation from a refinement phase whose dominant operations can be implemented using matrix--matrix multiplications (e.g., products with $A$ and with $I-YTY^\mathsf{T}$), together with $\mathcal{O}(k^3)$ operations on small $k\times k$ dense matrices (e.g., forming and applying $\widehat T$).
As a result, the refinement is amenable to high-performance implementations based on optimized level-3 BLAS kernels.

\begin{proposed}
The mixed-precision / BLAS3 message is already stated in the main text, so avoid adding another paragraph with essentially the same claim.
Instead, add only a few representative references (2--4) on mixed-precision eigensolvers / matrix-multiplication-oriented eigensolvers / accelerator-optimized symmetric eigensolvers, and cite them at the end of the paragraph above.
\end{proposed}

While the refinement phase incurs additional cost beyond the initial eigensolver, we emphasize that the objective of the proposed method is not necessarily to outperform a fully optimized double-precision eigensolver in wall-clock time.
Rather, it provides a mechanism for recovering high accuracy for a selected subset of eigenvectors starting from a low-precision or otherwise low-cost initial approximation.
A comprehensive performance study against tuned double-precision solvers and optimized implementations on modern accelerators is an important topic for future work.

We demonstrate the effectiveness of the proposed method through numerical experiments on dense test problems and sparse matrices from the SuiteSparse Matrix Collection~\cite{davis2011uf}.
For most matrices, highly accurate eigenvectors are obtained, with convergence behavior consistent with the theory.
When the eigenvalue gap is extremely small, the plain refinement iteration can stagnate or diverge; however, suitable preprocessing can restore convergence in a representative sparse example.
These results provide numerical evidence regarding both the stability and the limitations of the proposed method.

This paper substantially extends our preliminary JSIAM Letters note~\cite{terao2024iterative}, which introduced the core algorithmic framework.
In the present paper, we provide a convergence analysis, validate and illustrate the analysis through numerical experiments, and discuss practical variants and extensions that broaden the range of eigenvalue problems to which the refinement can be applied.

The contributions of this paper are as follows:
\begin{itemize}
\item We propose an iterative refinement framework that improves the accuracy of a selected subset of $k\ll n$ eigenvectors of a real symmetric matrix, requiring only $\mathcal{O}(nk)$ storage and enabling implementations whose dominant kernels are matrix--matrix multiplications.
\item We derive an efficient refinement step that approximates the leading correction matrix from Rayleigh quotients and residuals, and we show how to apply the underlying orthogonal matrices via compact WY factors without forming them explicitly.
\item We provide an error-block analysis and a convergence result that yields a linear convergence factor under an eigenvalue separation condition, and we clarify how convergence degrades for clustered eigenvalues.
\item We report numerical experiments on dense test problems and sparse matrices from the SuiteSparse Matrix Collection, illustrating both successful and failure regimes and demonstrating how preprocessing can restore convergence in a representative clustered case.
\end{itemize}

The remainder of this paper is organized as follows. Section~\ref{sec:notation} introduces notation. Section~\ref{sec:main-results} presents the proposed refinement method and its analysis. Section~\ref{sec:numerical-experiments} reports numerical experiments, Section~\ref{sec:variants} discusses several variants of the proposed method, and Section~\ref{sec:conclusion} concludes the paper.

\todo{(For a SIMAX submission) If switching to the official \texttt{siamart} template, add the required items (e.g., Keywords/AMS(MSC), affiliation/acknowledgments). For a clean submission, delete/disable all \texttt{\\todo\{...\}}.}

\section{Notation}\label{sec:notation}

For a matrix $A = (a_{ij}) \in \mathbb{R}^{m \times n}$, the entry in the $i$th row and $j$th column is denoted by $a_{ij}$, and the $j$th column of $A$ is denoted by $a_j \in \mathbb{R}^{m}$.  
For integers $1 \le i_1 \le i_2 \le m$ and $1 \le j_1 \le j_2 \le n$, we use
\[
A(i_1\!:\!i_2,\; j_1\!:\!j_2)
\]
to denote the block submatrix of $A$ obtained by extracting rows $i_1,\dots,i_2$ and columns $j_1,\dots,j_2$.

The notation $\|\cdot\|$ refers to the spectral norm, and $\|\cdot\|_F$ denotes the Frobenius norm.  

The matrix $I_k$ denotes the $k \times k$ identity matrix.  
The matrix $I_{n,k}$ denotes the first $k$ columns of $I_n$.
The matrix $O_{m,n} \in \mathbb{R}^{m \times n}$ denotes the zero matrix, and we write $O_n$ when $m=n$.

\section{Main Results}\label{sec:main-results}

Let $(\lambda_i, x^{(i)}) \in \mathbb{R} \times \mathbb{R}^n$ denote the eigenpairs of a real symmetric matrix $A$, where the eigenvectors are normalized as $\|x^{(i)}\| = 1$ for $i = 1, \dots, n$.  
We focus on a subset of $k < n$ eigenvectors and define
\[
X = (x^{(p_1)}, x^{(p_2)}, \dots, x^{(p_k)}) \in \mathbb{R}^{n \times k}.
\]

We assume the existence of an orthogonal matrix $H \in \mathbb{R}^{n \times n}$ satisfying
\begin{align}
    H^\mathsf{T} X &=
    \begin{pmatrix}
        I_k \\
        O_{n-k,k}
    \end{pmatrix}, \qquad
    H^\mathsf{T} A X =
    \begin{pmatrix}
        D_1 \\
        O_{n-k,k}
    \end{pmatrix}, \label{eq:HX-and-HAX}
\end{align}
where
\[
D_1 = \mathrm{diag}(\lambda_{p_1}, \dots, \lambda_{p_k}).
\]

Furthermore, we assume
\begin{align}
    H^\mathsf{T} H &= I_n, \qquad
    H^\mathsf{T} A H =
    \begin{pmatrix}
        D_1 & O_{n-k,k}^\mathsf{T} \\
        O_{n-k,k} & D_2
    \end{pmatrix},
    \label{eq:HTH-and-HTAH}
\end{align}
where $D_2 \in \mathbb{R}^{(n-k)\times(n-k)}$ is symmetric.

Let $\widehat{X}$ and $\widehat{H}$ denote approximations of $X$ and $H$.  
We introduce a correction matrix $E \in \mathbb{R}^{n \times n}$ by
\begin{align}
    H = \widehat{H} + \widehat{H} E.
    \label{eq:H-E-def}
\end{align}

Using the block partition
\begin{align}
    E =
    \begin{pmatrix}
        E_{11} & E_{12} \\
        E_{21} & E_{22}
    \end{pmatrix},
    \quad
    E_L =
    \begin{pmatrix}
        E_{11} \\
        E_{21}
    \end{pmatrix}, \quad
    E_{11}\in\mathbb{R}^{k\times k}, \ \ E_{21},E_{12}^\mathsf{T}\in\mathbb{R}^{(n-k)\times k},
    \label{eq:E-block}
\end{align}
we seek an approximation $\widetilde{E}_L$ of $E_L$.

The refinement of the approximate eigenvector matrix $\widehat{X}$ is then carried out by iterating
\begin{align}
    \widehat{X} \leftarrow \widehat{X} + \widehat{H} \, \widetilde{E}_L,
    \label{eq:update-X}
\end{align}
which progressively improves the accuracy of $\widehat{X}$.

\subsection{Algorithm}

\subsubsection{Construction of the orthogonal matrix \texorpdfstring{$H$}{H}}

A memory-efficient way to represent and apply such orthogonal matrices is the compact WY representation~\cite{schreiber1989storage}.
We describe how to construct an orthogonal matrix $H \in \mathbb{R}^{n \times n}$ such that $H^\mathsf{T}X=I_{n,k}$
from a matrix $X \in \mathbb{R}^{n \times k}$, following the approaches in~\cite{ballard2015reconstructing,yamamoto2012aggregation}.

We assume that the columns of $X$ are mutually orthogonal and that $k < n$.
In the compact WY representation, an orthogonal matrix $H$ is written as
\[
H = I_n - Y T Y^\mathsf{T},
\qquad
Y \in \mathbb{R}^{n \times k}, \quad
T \in \mathbb{R}^{k \times k}.
\]
A key advantage of this representation is that $H$ can be stored using only $\mathcal{O}(nk)$ memory,
and the full $n \times n$ matrix need not be formed explicitly.

By choosing $Y$ and $T$ such that
\[
H^\mathsf{T} X = I_{n,k},
\]
the orthogonal matrix $H$ maps the column space of $X$ onto the first $k$ coordinate directions.
Here, $I_{n,k}$ denotes the first $k$ columns of $I_n$.

Yamamoto~\cite{yamamoto2012aggregation} proposed Algorithm~\ref{alg:compwy},
which efficiently constructs $Y$ and $T$.
In particular, when the columns of $X$ are orthogonal, one can set $Y = Q - I_{n,k}$,
leading to a simple construction of~$H$.

\begin{algorithm}
  \caption{$[Y,T]=\mathrm{CompactWY}(Q)$~\cite{yamamoto2012aggregation}}\label{alg:compwy}
  \begin{algorithmic}[1]
    \Require $Q\in\mathbb{R}^{n\times k}$ with orthonormal columns
    \State $Y = Q - I_{n,k}$
    \State $T = -Y_1^{-\mathsf{T}}$ \hfill\% $Y_1$ is the leading $k \times k$ block of $Y$
    \Ensure $Q = (I - Y T Y^\mathsf{T}) I_{n,k}$ and $I-YTY^\mathsf{T}$ is orthogonal
  \end{algorithmic}
\end{algorithm}

The crucial requirement for Algorithm~\ref{alg:compwy} is that the leading block
$Y_1 \in \mathbb{R}^{k \times k}$ is nonsingular.
In floating-point arithmetic, however, $Y_1$ may be singular or nearly singular,
and computing $Y_1^{-1}$ may become unstable.

To overcome this issue, Ballard~\cite{ballard2015reconstructing}
introduced a modified LU factorization (Algorithm~\ref{modifiedlu}),
which enables the construction of $H$ even when $Y_1$ is singular.
This method performs an LU factorization of $Q$ while applying sign corrections,
yielding a triangular factorization $Q - \Sigma = YU$,
where $\Sigma$ is a diagonal matrix determined by the signs of the diagonal entries of $Q$.

Although robust, the modified LU method requires $\mathcal{O}(nk^2)$ computational cost,
which is significantly higher than that of the compact WY method.

\begin{algorithm}
  \caption{$[Y,U,\Sigma]=\mathrm{modified\_LU}(Q)$}\label{modifiedlu}
  \begin{algorithmic}[1]
    \Require $Q\in\mathbb{R}^{n\times k}$
    \For{$j=1,\dots,k$}
        \State $\sigma_j = -\mathrm{sign}(q_{jj})$
        \State $q_{jj} = q_{jj} - \sigma_j$
        \State $q_{lj} = q_{lj} / q_{jj}$ \quad ($j+1 \le l \le n$)
        \State $q_{lh} = q_{lh} - q_{lj} q_{jh}$ \quad ($j+1 \le l \le n$, $j+1 \le h \le k$)
    \EndFor
    \State $\Sigma = \mathrm{diag}(\sigma)$
    \Ensure The upper triangular part stores $U$, the lower triangular part stores $Y$, satisfying $Q-\Sigma = YU$
  \end{algorithmic}
\end{algorithm}

Using the modified LU factorization, $Y_1$ need not be nonsingular.
Based on Ballard's result, Algorithm~\ref{alg:compwy_lu} constructs $H$
for any $Q$ with orthonormal columns.

\begin{algorithm}
  \caption{$[Y,T,\Sigma]=\mathrm{CompactWY\_LU}(Q)$~\cite{ballard2015reconstructing}}\label{alg:compwy_lu}
  \begin{algorithmic}[1]
    \Require $Q\in\mathbb{R}^{n\times k}$ with orthonormal columns
    \State $[Y,U,\Sigma] = \mathrm{modified\_LU}(Q)$ \hfill\% Algorithm~\ref{modifiedlu}
    \State $T = -U \Sigma Y_1^{-\mathsf{T}}$ \hfill\% $Y_1$ is the leading $k \times k$ block of $Y$
    \Ensure $Q = (I - Y T Y^\mathsf{T}) I_{n,k}$ and $I-YTY^\mathsf{T}$ is orthogonal
  \end{algorithmic}
\end{algorithm}

The compact WY method is fast and memory-efficient but depends on the nonsingularity of $Y_1$.
The modified LU method removes this restriction and works in more general cases,
at the cost of higher computational complexity.
In practice, Algorithm~\ref{alg:compwy} is often applicable and preferable.
Even when Algorithm~\ref{alg:compwy} fails,
one can apply a one-time transformation $\widehat{X} \leftarrow \widehat{X} \Sigma$,
after which Algorithm~\ref{alg:compwy} succeeds for subsequent iterations.

\subsubsection{Iterative refinement for a subset of eigenvectors}

We now describe how to compute an approximation $\widetilde{E}_L$ of the leading correction block $E_L$.
Assume that $\|E\| < 1$.
Then $I + E$ is invertible and admits the Neumann expansion
\begin{align}
    (I_n + E)^{-1}
    &= I_n - E + \Delta_E,
    \qquad
	    \Delta_E = \sum_{\ell=2}^{\infty} (-E)^\ell,
    \qquad
    \|\Delta_E\| \le \frac{\|E\|^2}{1-\|E\|}.
    \label{eq:Neumann}
\end{align}

Let $\Phi_1$ and $\Phi_2$ satisfy
\begin{align}
    \widehat{H}^\mathsf{T}\widehat{H}
    &= (I - E + \Delta_E)^\mathsf{T}(I-E+\Delta_E)
       = I_n - E - E^\mathsf{T} + \Phi_1,
      \label{eq:HhatHhat}\\
    \widehat{H}^\mathsf{T}A\widehat{H}
    &= (I - E + \Delta_E)^\mathsf{T} D (I - E + \Delta_E)
       = D - D E - E^\mathsf{T} D + \Phi_2,
      \label{eq:HhatA}
\end{align}
where $D = \mathrm{diag}(D_1, D_2)$.

Using \eqref{eq:E-block} and \eqref{eq:HhatHhat}, we obtain
\begin{align}
    \widehat{H}^\mathsf{T}\widehat{X}
    =
    \begin{pmatrix}
        I_k - E_{11} - E_{11}^\mathsf{T}\\[1mm]
        -E_{21} - E_{12}^\mathsf{T}
    \end{pmatrix}
    + \Delta_1,
    \qquad
    \Delta_1 = \Phi_1(1\!:\!n,\,1\!:\!k).
    \label{eq:HhatX-block}
\end{align}
Similarly, from \eqref{eq:HhatA} we obtain
\begin{align}
    \widehat{H}^\mathsf{T} A \widehat{X}
    &=
    \begin{pmatrix}
        D_1 - D_1 E_{11} - E_{11}^\mathsf{T} D_1\\[1mm]
        -E_{12}^\mathsf{T} D_1
    \end{pmatrix}
    + \Delta_2,
    \ \ 
    \Delta_2 =
    \Phi_2(1\!:\!n,1\!:\!k)
    -
    \begin{pmatrix}
        O_k\\
        D_2 E_{21}
    \end{pmatrix}.
    \label{eq:HhatAX-block}
\end{align}

Neglecting $\Delta_1$ and $\Delta_2$, we obtain the approximations
\begin{align}
    \widehat{H}^\mathsf{T}\widehat{X}
    &=
    \begin{pmatrix}
        I_k - \widetilde{E}_{11} - \widetilde{E}_{11}^\mathsf{T}\\[1mm]
        -\widetilde{E}_{21} - \widetilde{E}_{12}^\mathsf{T}
    \end{pmatrix},
    \label{eq:R}\\
    \widehat{H}^\mathsf{T}A\widehat{X}
    &=
    \begin{pmatrix}
        \widetilde{D}_1 - \widetilde{D}_1 \widetilde{E}_{11} - \widetilde{E}_{11}^\mathsf{T}\widetilde{D}_1\\[1mm]
        -\widetilde{E}_{12}^\mathsf{T}\widetilde{D}_1
    \end{pmatrix},
    \label{eq:S}
\end{align}

Following \cite{ogita2018iterative}, the diagonal entries of $\widetilde{D}_1$ are computed by the Rayleigh quotient:
\begin{align}
    \widetilde{d}_j
    =
    \frac{\widehat{x}_j^\mathsf{T} A \widehat{x}_j}{\widehat{x}_j^\mathsf{T} \widehat{x}_j},
    \qquad j=1,\dots,k.
    \label{eq:Rayleigh}
\end{align}
Similarly, the diagonal entries of $\widetilde{E}_L$ satisfy
\begin{align}
    \widetilde{e}_{jj}
    = \frac{1-\widehat{x}_j^\mathsf{T}\widehat{x}_j}{2},
    \qquad j=1,\dots,k.
    \label{eq:Ediag}
\end{align}

From \eqref{eq:R} and \eqref{eq:S}, define
\begin{align}
    V
    :={}\widehat{H}^\mathsf{T}(A\widehat{X} - \widehat{X}\widetilde{D}_1)
    =
    \begin{pmatrix}
        \widetilde{D}_1 \widetilde{E}_{11} - \widetilde{E}_{11} \widetilde{D}_1\\[1mm]
        \widetilde{E}_{21} \widetilde{D}_1
    \end{pmatrix}.
    \label{eq:V-def}
\end{align}
The off-diagonal entries of $\widetilde{E}_L$ are given by
\begin{align}
    \widetilde{e}_{ij}
    =
    \begin{cases}
        \displaystyle \frac{v_{ij}}{\widetilde{d}_j - \widetilde{d}_i},
        & 1\le i,j\le k,\ \widetilde{d}_i\ne\widetilde{d}_j,\\[6pt]
        \displaystyle \frac{v_{ij}}{\widetilde{d}_j},
        & k+1\le i\le n,
    \end{cases}
    \label{eq:Eoffdiag}
\end{align}

Based on \eqref{eq:Rayleigh}, \eqref{eq:Ediag}, and \eqref{eq:Eoffdiag},
Algorithm~\ref{alg:proposed} summarizes the computation of~$\widetilde{E}_L$.

\begin{algorithm}
\caption{Refinement algorithm for a subset of eigenvectors}\label{alg:proposed}
\begin{algorithmic}[1]
    \Require A real symmetric matrix $A\in\mathbb{R}^{n\times n}$ and an approximate eigenvector matrix $\widehat{X}\in\mathbb{R}^{n\times k}$
    \State $W = A\widehat{X}$
    \State Compute $\widehat{Y}$ and $\widehat{T}$ using Algorithm~\ref{alg:compwy} or Algorithm~\ref{alg:compwy_lu}
    \State $\alpha_j = \widehat{x}_j^\mathsf{T}\widehat{x}_j$ for $j=1,\dots,k$
    \State $\widetilde{d}_j = (\widehat{x}_j^\mathsf{T} w_j)/\alpha_j$ for $j=1,\dots,k$
    \State $V = \widehat{H}^\mathsf{T}(W - \widehat{X}\widetilde{D}_1)$
    \State $
    \widetilde{e}_{ij} =
    \begin{cases}
        (1-\alpha_i)/2, & i=j,\\
        \beta_{ij}, & 1 \le i,j \le k,\ |\widetilde{d}_j-\widetilde{d}_i|\le\delta,\\
        v_{ij}/(\widetilde{d}_j-\widetilde{d}_i), & 1 \le i,j \le k,\ |\widetilde{d}_j-\widetilde{d}_i|>\delta,\\
        v_{ij}/\widetilde{d}_j, & k+1 \le i \le n,
    \end{cases}
    $
    \State \Return $\widetilde{X} = \widehat{X} + \widehat{H}\widetilde{E}_L$
    \Ensure An updated approximate eigenvector matrix $\widetilde{X}\in\mathbb{R}^{n\times k}$
\end{algorithmic}
\end{algorithm}

In Algorithm~\ref{alg:proposed}, $\delta>0$ is a threshold used to detect (near-)clustered Rayleigh quotients and avoid division by small denominators.
The choice of $\beta_{ij}$ in Algorithm~\ref{alg:proposed} depends on the strategy used to handle (near-)clustered Rayleigh quotients.
For clustered eigenvalues,
\[
\beta_{ij} = -\widehat{x}_i^\mathsf{T}\widehat{x}_j/2
\]
is used~\cite{ogita2019iterative}.
When a preprocessing step is applied, $\beta_{ij}=0$ is used instead~\cite{shiroma2019tracking}.

Table~\ref{tab:cost} summarizes the computational cost of one refinement iteration in Algorithm~\ref{alg:proposed}.
\begin{table}[htbp]
\centering
\caption{Cost per iteration of Algorithm~\ref{alg:proposed}.}
\label{tab:cost}
\begin{tabular}{rll}
\toprule
Line(s) & Operation & Cost\\
\midrule
1 & Multiply by $A$ ($W=A\widehat X$) & $\mathcal{O}(\mathrm{nnz}(A)k)$ flops\\
2 & Build compact WY factors ($\widehat Y,\widehat T$) & $\mathcal{O}(k^3)$ flops\\
5,7 & Apply $\widehat H^\mathsf{T}$ and $\widehat H$ & $4nk^2+2k^3$ flops\\
others & Vector ops and small dense ops & $\mathcal{O}(nk+k^2)$ flops\\
\bottomrule
\end{tabular}
\end{table}

Here, $\mathrm{nnz}(A)$ denotes the number of nonzero entries of $A$ (so $\mathrm{nnz}(A)=n^2$ in the dense case).
The applications of $\widehat H$ and $\widehat H^\mathsf{T}$ in lines 5 and 7 can be implemented without forming $\widehat H$ explicitly:
using the compact WY representation $\widehat H=I_n-\widehat Y\widehat T\widehat Y^\mathsf{T}$ with $\widehat Y\in\mathbb{R}^{n\times k}$ and $\widehat T\in\mathbb{R}^{k\times k}$,
we compute for an $n\times k$ matrix $B$
\begin{align}
    \widehat H B = B - \widehat Y\bigl(\widehat T(\widehat Y^\mathsf{T}B)\bigr),
\end{align}
and similarly for $\widehat H^\mathsf{T}B$, which requires only $\mathcal{O}(nk)$ storage.

\subsubsection{Implementation details}

We describe practical implementation choices for Algorithm~\ref{alg:proposed}, focusing on stopping criteria, precision, and the parameter~$\delta$.

\paragraph{Stopping criteria}
The refinement can be terminated using either a backward-error criterion or a correction-size criterion.
A backward-error criterion monitors the relative eigenpair residual
\[
\frac{\|A\widehat X-\widehat X\widetilde D_1\|_F}{\|A\|},
\]
and stops when it falls below a prescribed tolerance.
Alternatively, one may stop when the correction size $\|\widetilde E_L\|_F$ becomes smaller than a tolerance.
Since $X-\widehat X=\widehat H E_L$ and $E_L\approx \widetilde E_L$, this quantity serves as a practical proxy for the eigenvector error.
Both criteria require only $\mathcal{O}(nk)$ work and have negligible overhead compared with the dominant matrix--matrix kernels.

\paragraph{Precision and orthogonality}
If the goal is to recover double-precision-level accuracy, a simple choice is to perform all refinement computations in double precision, as done in the experiments.
Mixed-precision variants are also possible; for example, some matrix--matrix multiplications in the applications of $\widehat H$ and $\widehat H^\mathsf{T}$ may be computed in lower precision while keeping the accumulation and the small $k\times k$ computations in double.
A systematic study of such variants is left for future work.
To satisfy the assumptions of the compact WY construction, we normalize the columns of $\widehat X$ after each update and optionally reorthogonalize $\widehat X$ when loss of orthogonality becomes non-negligible.

\paragraph{Choice of $\delta$}
The parameter $\delta$ in Algorithm~\ref{alg:proposed} is used to detect (near-)clustered Rayleigh quotients and avoid division by small denominators.
In our MATLAB implementation, we use the heuristic
\[
\delta = c\,\|A\|\,u,
\]
where $u$ is the unit roundoff of the working precision and $c>1$ (we used $c=10$).

\subsection{Convergence Analysis}\label{sec:convergence-analysis}

We analyze the convergence conditions of Algorithm~\ref{alg:proposed}. Specifically, for the update
$\widetilde X= \widehat X+\widehat H\widetilde E_L$, we study conditions under which
\begin{align}
    \frac{\|X-\widetilde X\|}{\|X-\widehat X\|}<1
\end{align}
holds.
In particular, for each eigenvector $x^{(p_j)}$, $1\leq j\leq k$, we show linear convergence with the
asymptotic factor
\begin{align}
    \limsup_{\epsilon\to0}\frac{\|x^{(p_j)}-\widetilde x^{(p_j)}\|}{\|x^{(p_j)}-\widehat x^{(p_j)}\|}
    =
    \frac{\max_{k+1\leq i\leq n}|\lambda_{p_i}|}{|\lambda_{p_j}|}.
\end{align}

\subsubsection{Previous work}

We briefly review part of the analysis by Ogita and Aishima~\cite{ogita2018iterative}.

We first introduce the notation. Given an orthogonal matrix $H$ and its approximation $\widehat H$,
define the correction matrix $E$ by $H=\widehat H(I+E)$.
We set
\begin{align}
    \epsilon&:=\|E\|<1,\\
    \chi(\epsilon)&:=\frac{3-2\epsilon}{(1-\epsilon)^2},\\
    \eta(\epsilon)&:=\frac{2(1+2\epsilon+\chi(\epsilon)\epsilon^2)\chi(\epsilon)}{(1-\epsilon)(1-2\epsilon-\chi(\epsilon)\epsilon^2)},\\
    \omega(\epsilon)&:=2\chi(\epsilon)+2\eta(\epsilon)\epsilon+\chi(\epsilon)\eta(\epsilon)\epsilon^2
\end{align}
and assume $\epsilon<1/100$, in which case
\begin{align}
    \chi(\epsilon)< 3.05,\quad
    \eta(\epsilon)< 7,\quad
    \omega(\epsilon)< 6.4
\end{align}
as noted in~\cite{ogita2018iterative}.

\begin{lemma}[Ogita--Aishima~\cite{ogita2018iterative}]\label{lem:ogita1}
    For \eqref{eq:HhatHhat} and \eqref{eq:HhatA}, if $\epsilon=\|E\|<1$, then
    \begin{align}
        \|\Phi_1\|&\leq \chi(\epsilon)\epsilon^2\label{eq:norm_phi1},\\
        \|\Phi_2\|&\leq \chi(\epsilon)\|A\|\epsilon^2\label{eq:norm_phi2}
    \end{align}
    hold.
\end{lemma}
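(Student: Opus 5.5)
Both bounds follow from expanding the two products in \eqref{eq:HhatHhat} and \eqref{eq:HhatA} and estimating the leftover terms with the Neumann bound \eqref{eq:Neumann}. Since $H=\widehat H(I+E)$ and $\|E\|=\epsilon<1$, we have $\widehat H = H(I+E)^{-1} = H(I-E+\Delta_E)$. Orthogonality of $H$ gives $\widehat H^\mathsf{T}\widehat H=(I-E+\Delta_E)^\mathsf{T}(I-E+\Delta_E)$. By \eqref{eq:HTH-and-HTAH}, $H^\mathsf{T}AH=D$, which gives the second identity. With $F:=-E+\Delta_E$, the definitions read $\Phi_1=F^\mathsf{T}F+\Delta_E+\Delta_E^\mathsf{T}$ and $\Phi_2=F^\mathsf{T}DF+D\Delta_E+\Delta_E^\mathsf{T}D$.

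Next we bound the pieces. From \eqref{eq:Neumann}, $\|\Delta_E\|\le \epsilon^2/(1-\epsilon)$. Hence
\[
\|F\|\le \epsilon+\frac{\epsilon^2}{1-\epsilon}=\frac{\epsilon}{1-\epsilon}.
\]
This gives
\[
\|\Phi_1\|\le \frac{\epsilon^2}{(1-\epsilon)^2}+\frac{2\epsilon^2}{1-\epsilon}
=\frac{\epsilon^2\bigl(1+2(1-\epsilon)\bigr)}{(1-\epsilon)^2}
=\frac{3-2\epsilon}{(1-\epsilon)^2}\,\epsilon^2=\chi(\epsilon)\epsilon^2 .
\]
The constant $\chi(\epsilon)$ therefore comes out exactly, which suggests this is the intended route.

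For $\Phi_2$ we argue the same way, using $\|D\|=\|H^\mathsf{T}AH\|=\|A\|$ because $H$ is orthogonal and $D$ is block diagonal with the eigenvalues of $A$. This gives
\[
\|\Phi_2\|\le \|D\|\,\|F\|^2+2\|D\|\,\|\Delta_E\|\le \chi(\epsilon)\|A\|\epsilon^2 .
\]

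There is no real obstacle. The only points to check are the algebraic identity $(1-\epsilon)^{-2}+2(1-\epsilon)^{-1}=(3-2\epsilon)(1-\epsilon)^{-2}$ and the fact that $\|D\|=\|A\|$. The latter uses the assumption \eqref{eq:HTH-and-HTAH}; in this paper $D_2$ is only assumed symmetric, but it is similar to the remaining spectral part of $A$, so its norm is at most $\|A\|$.
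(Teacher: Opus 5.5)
Your proof is correct and is essentially the Ogita--Aishima argument, which the paper cites without reproducing: writing $\widehat H=H(I-E+\Delta_E)$ gives $\Phi_1=\Delta_E+\Delta_E^\mathsf{T}+F^\mathsf{T}F$ and $\Phi_2=D\Delta_E+\Delta_E^\mathsf{T}D+F^\mathsf{T}DF$, and the bounds $\|F\|\le\epsilon/(1-\epsilon)$ and $\|\Delta_E\|\le\epsilon^2/(1-\epsilon)$ give exactly the constant $\chi(\epsilon)$. Your final caveat about $D_2$ is unnecessary, because $\|D\|=\|H^\mathsf{T}AH\|=\|A\|$ follows directly from the orthogonality of $H$, whatever the structure of $D_2$.
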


\begin{lemma}[Ogita--Aishima~\cite{ogita2018iterative}]\label{lem:ogita2}
    Let $A$ be a real symmetric $n\times n$ matrix with simple eigenvalues $\lambda_i, i=1,\dots,n$, and let $p$ be a permutation of $\{1,\dots,n\}$.
    Define $X=(x^{(p_1)},x^{(p_2)},\dots,x^{(p_k)})\in\mathbb{R}^{n\times k}$ with $k<n$.
    For a given full column rank $\widehat X\in\mathbb{R}^{n\times k}$, suppose that Algorithm~\ref{alg:proposed} is applied to $A$ and $\widehat X$ in exact arithmetic.
    If 
    \begin{align}
        \epsilon\leq \sqrt{\frac{\min_{\substack{1\le i\le k\\ i\ne j}}|\lambda_{p_j}-\lambda_{p_i}|}{2\eta(\epsilon)\|A\|}}
    \end{align}
    then we obtain
    \begin{align}
        |\lambda_{p_i}-\widetilde d_i|&\leq \eta(\epsilon)\|A\|\epsilon^2,\\
        |e_{ij}-\widetilde e_{ij}|&<\frac{\omega(\epsilon)\|A\|\epsilon^2}{|\lambda_{p_j}-\lambda_{p_i}|-2\eta(\epsilon)\|A\|\epsilon^2}\label{eq:E11}
    \end{align}
    for $1\leq i,j\leq k$.
\end{lemma}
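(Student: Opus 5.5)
We adapt the Ogita--Aishima argument to the $k$-column setting, working only with the first $k$ columns of the relations \eqref{eq:HhatHhat}--\eqref{eq:HhatA}. The Rayleigh quotient bound comes first. Write $\widehat x_i=\widehat H(e_i-E e_i+\Delta_E e_i)$, which is the $i$th column of $\widehat H(I+E)^{-1}$ since $X=HI_{n,k}$; here $\widehat X$ is understood as the matrix whose columns satisfy this representation. Then
\[
\widetilde d_i=\frac{(\widehat H^\mathsf{T}A\widehat X)_{ii}}{(\widehat H^\mathsf{T}\widehat X)_{ii}} .
\]
By \eqref{eq:HhatX-block} and \eqref{eq:HhatAX-block}, the numerator equals $\lambda_{p_i}(1-2e_{ii})+(\Phi_2)_{ii}$ and the denominator equals $1-2e_{ii}+(\Phi_1)_{ii}$. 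The $D_2E_{21}$ term in $\Delta_2$ only affects rows $k+1,\dots,n$, so it does not enter these diagonal entries.

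Next we bound $\lambda_{p_i}-\widetilde d_i$. It equals
\[
\frac{\lambda_{p_i}(\Phi_1)_{ii}-(\Phi_2)_{ii}}{1-2e_{ii}+(\Phi_1)_{ii}} .
\]
Lemma~\ref{lem:ogita1} bounds the numerator by $2\chi(\epsilon)\|A\|\epsilon^2$, and the denominator is at least $1-2\epsilon-\chi(\epsilon)\epsilon^2$. Comparing with the definition of $\eta$, this quotient is below $\eta(\epsilon)\|A\|\epsilon^2$; the extra factors $(1+2\epsilon+\chi\epsilon^2)/(1-\epsilon)$ in $\eta$ give slack. That yields the first inequality.

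For the off-diagonal entries with $i\ne j$, $1\le i,j\le k$, the two blocks \eqref{eq:HhatX-block} and \eqref{eq:HhatAX-block} give two exact identities for the $(i,j)$ entry of $V=\widehat H^\mathsf{T}(A\widehat X-\widehat X\widetilde D_1)$:
\[
v_{ij}=(\widehat H^\mathsf{T}A\widehat X)_{ij}-(\widehat H^\mathsf{T}\widehat X)_{ij}\widetilde d_j
=-\lambda_{p_i}e_{ij}-e_{ji}\lambda_{p_j}+(\Phi_2)_{ij}+\widetilde d_j\bigl(e_{ij}+e_{ji}-(\Phi_1)_{ij}\bigr).
\]
To eliminate $e_{ji}$, we use the symmetric-part information from the $(1,1)$ block of \eqref{eq:HhatX-block}: $e_{ij}+e_{ji}=(\Phi_1)_{ij}-(\widehat H^\mathsf{T}\widehat X)_{ij}+\delta_{ij}$. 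This is exactly how the algorithm decouples the unknowns. The result is
\[
v_{ij}=(\lambda_{p_j}-\lambda_{p_i})e_{ij}+r_{ij},
\]
with $r_{ij}$ collecting $(\Phi_2)_{ij}$, $\lambda_{p_j}(\Phi_1)_{ij}$, and terms $(\widetilde d_j-\lambda_{p_j})(e_{ij}+e_{ji}-(\Phi_1)_{ij})$.

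We then bound $r_{ij}$. The pieces are at most $\chi\|A\|\epsilon^2$, $\chi\|A\|\epsilon^2$, and $\eta\|A\|\epsilon^2(2\epsilon+\chi\epsilon^2)$, so $|r_{ij}|\le\omega(\epsilon)\|A\|\epsilon^2$, matching the definition of $\omega$. Then
\[
\widetilde e_{ij}-e_{ij}=\frac{v_{ij}}{\widetilde d_j-\widetilde d_i}-e_{ij}
=\frac{r_{ij}+e_{ij}\bigl((\lambda_{p_j}-\lambda_{p_i})-(\widetilde d_j-\widetilde d_i)\bigr)}{\widetilde d_j-\widetilde d_i}.
\]
The denominator is at least $|\lambda_{p_j}-\lambda_{p_i}|-2\eta\|A\|\epsilon^2$, which is positive by the hypothesis on $\epsilon$. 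This also ensures $\widetilde d_i\ne\widetilde d_j$, so the division branch of \eqref{eq:Eoffdiag} is used, with $\delta$ taken as $0$ in exact arithmetic.

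The main obstacle is the extra term $e_{ij}\cdot O(\eta\|A\|\epsilon^2)$, which is $O(\epsilon^3)$. It must be absorbed into $\omega$ without spoiling the stated constant. I would first try to avoid it altogether by writing $v_{ij}=(\widetilde d_j-\lambda_{p_i})e_{ij}+\dots$ directly, i.e.\ substituting $\widetilde d_j$ rather than $\lambda_{p_j}$ throughout and expanding around $\widetilde d$. If that fails, the fallback is to absorb the term into the $2\eta\epsilon$ contribution in $\omega$, using $|e_{ij}|\le\epsilon$.

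Finally, the diagonal entries $\widetilde e_{jj}=(1-\widehat x_j^\mathsf{T}\widehat x_j)/2$ follow from the $(j,j)$ entry of \eqref{eq:HhatHhat}, with error $|(\Phi_1)_{jj}|/2\le\chi\epsilon^2$. This is covered by the bound for $i=j$, interpreted as in \cite{ogita2018iterative}.
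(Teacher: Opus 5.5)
Your Rayleigh-quotient bound is correct. Your overall route is the Ogita--Aishima computation that the paper relies on; the paper gives no proof of its own, only the citation.

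As written, however, the off-diagonal estimate is not closed. The decomposition you carry out is $v_{ij}=(\lambda_{p_j}-\lambda_{p_i})e_{ij}+r_{ij}$ with $|r_{ij}|\le\omega(\epsilon)\|A\|\epsilon^2$. It leaves the extra term $e_{ij}\bigl((\lambda_{p_j}-\lambda_{p_i})-(\widetilde d_j-\widetilde d_i)\bigr)$, so it only yields the numerator $(\omega(\epsilon)+2\eta(\epsilon)\epsilon)\|A\|\epsilon^2$.

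Your fallback cannot repair this. The $2\eta(\epsilon)\epsilon$ part of $\omega(\epsilon)$ is already spent on $(\widetilde d_j-\lambda_{p_j})(e_{ij}+e_{ji})$ inside $r_{ij}$, so absorbing the extra term there double-counts.

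Your first option is the one that works, and it is a one-line identity. For $i\ne j$, the $(\widetilde d_j-\lambda_{p_j})e_{ij}$ contributions cancel, and
\[
v_{ij}-(\widetilde d_j-\widetilde d_i)e_{ij}=(\widetilde d_i-\lambda_{p_i})e_{ij}+(\widetilde d_j-\lambda_{p_j})e_{ji}+(\Phi_2)_{ij}-\widetilde d_j(\Phi_1)_{ij}.
\]
Since $|\widetilde d_j|\le\|A\|$ (it is a Rayleigh quotient), the right-hand side is at most $(2\chi(\epsilon)+2\eta(\epsilon)\epsilon)\|A\|\epsilon^2$. This is strictly less than $\omega(\epsilon)\|A\|\epsilon^2$ for $\epsilon>0$. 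If you use the cruder bound $|\widetilde d_j|\le(1+\eta(\epsilon)\epsilon^2)\|A\|$ instead, you recover $\omega(\epsilon)$ exactly. Dividing by $|\widetilde d_j-\widetilde d_i|\ge|\lambda_{p_j}-\lambda_{p_i}|-2\eta(\epsilon)\|A\|\epsilon^2$ then gives the stated bound.

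There are three smaller corrections.

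First, your opening representation should read $\widehat x_i=\widehat He_i=H(I+E)^{-1}e_i$. The fact you need is $\widehat X=\widehat HI_{n,k}$, which the compact WY construction guarantees, not $X=HI_{n,k}$. It is what makes $\widetilde d_i$ the ratio of the $(i,i)$ entries of $\widehat H^\mathsf{T}A\widehat X$ and $\widehat H^\mathsf{T}\widehat X$.

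Second, the hypothesis with ``$\le$'' only makes $|\lambda_{p_j}-\lambda_{p_i}|-2\eta(\epsilon)\|A\|\epsilon^2$ nonnegative. Positivity of the denominator, and hence $\widetilde d_i\ne\widetilde d_j$, needs strict inequality. You also tacitly use $1-2\epsilon-\chi(\epsilon)\epsilon^2>0$, which comes from the standing assumption $\epsilon<1/100$.

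Third, the second inequality cannot be read for $i=j$, because its right-hand side is then negative. The diagonal case is the separate identity $\widetilde e_{jj}-e_{jj}=-(\Phi_1)_{jj}/2$, which gives $|\widetilde e_{jj}-e_{jj}|\le\chi(\epsilon)\epsilon^2/2$. You computed this correctly, but it should be stated as its own bound, not as ``covered by'' the displayed one.
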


\subsubsection{Analysis of the proposed method}

We next present lemmas needed for the analysis of the proposed method.
\begin{lemma}\label{lem:proposed1}
    Let $H$ be an $n\times n$ orthogonal matrix and $\widehat H$ be an $n\times n$ nonsingular matrix.
    Define $E$ such that $H=\widehat H(I+E)$.
    If $\epsilon=\|E\|<1/2$, then it holds
    \begin{align}
        \kappa_2(\widehat H)\leq \frac{1+\epsilon}{1-\epsilon}<\frac{1}{1-2\epsilon}
    \end{align}
\end{lemma}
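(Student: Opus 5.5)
The plan is to write $\widehat H$ in terms of the orthogonal matrix $H$ and bound its extreme singular values directly. Since $H=\widehat H(I+E)$ and $\|E\|=\epsilon<1$, the matrix $I+E$ is invertible by the Neumann series \eqref{eq:Neumann}. This gives
\[
\widehat H = H(I+E)^{-1}.
\]
Because $H$ is orthogonal, multiplying by it preserves singular values. Hence $\sigma_{\max}(\widehat H)=\|(I+E)^{-1}\|$ and $\sigma_{\min}(\widehat H)=1/\|I+E\|$, and therefore
\[
\kappa_2(\widehat H)=\|(I+E)^{-1}\|\,\|I+E\| .
\]
Equivalently, $\kappa_2(\widehat H)=\kappa_2(I+E)$.

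The key steps are as follows.
\begin{enumerate}
\item Bound the forward factor by the triangle inequality: $\|I+E\|\le 1+\epsilon$.
\item Bound the inverse factor with the Neumann series (or the standard perturbation lemma): $\|(I+E)^{-1}\|\le \sum_{\ell\ge0}\epsilon^\ell = 1/(1-\epsilon)$.
\item Combine the two bounds to get $\kappa_2(\widehat H)\le (1+\epsilon)/(1-\epsilon)$.
\end{enumerate}
The hypothesis $\epsilon<1/2$ is not yet needed at this stage. Only $\epsilon<1$ is used to ensure invertibility.

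It remains to prove the strict inequality $(1+\epsilon)/(1-\epsilon)<1/(1-2\epsilon)$ for $0\le\epsilon<1/2$. Here the assumption $\epsilon<1/2$ enters: it guarantees $1-2\epsilon>0$, so the right-hand side is positive and we may clear the positive denominators. The inequality then becomes
\[
(1+\epsilon)(1-2\epsilon)<1-\epsilon,
\]
that is, $1-\epsilon-2\epsilon^2<1-\epsilon$, i.e.\ $-2\epsilon^2<0$.

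This reduction exposes a boundary case. For $\epsilon=0$ the two sides are equal, both being $1$, so the inequality is strict only for $\epsilon>0$. I expect this degenerate case to be the only real obstacle. I would handle it by noting that $\epsilon=0$ means $\widehat H=H$ is exact, and either state strictness for $0<\epsilon<1/2$ or read the final inequality as $\le$ when $\epsilon=0$.

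Otherwise the argument is routine. The only point needing care is the identity $\sigma_{\min}(H(I+E)^{-1})=\sigma_{\min}((I+E)^{-1})=1/\|I+E\|$, which holds because left multiplication by an orthogonal matrix preserves singular values.
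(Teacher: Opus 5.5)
Your proposal is correct and follows the paper's proof essentially verbatim: you write $\widehat H = H(I+E)^{-1}$, use the orthogonal invariance of singular values, and bound $\|(I+E)^{-1}\|\le 1/(1-\epsilon)$ and $\sigma_{\min}(\widehat H)\ge 1/(1+\epsilon)$. The paper never verifies the second inequality, and your observation is right: it reduces to $-2\epsilon^2<0$, so it is strict only for $0<\epsilon<1/2$ and becomes an equality at $\epsilon=0$.
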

\begin{proof}
    From $H=\widehat H(I+E)$, we have $\widehat H = H(I+E)^{-1}$.
    Since $H$ is orthogonal, it follows that
    \begin{align}
        \|\widehat H\|
        &=\|(I+E)^{-1}\|\leq \frac{1}{1-\epsilon},\\
        \sigma_{\min}(\widehat H)
        &=\sigma_{\min}((I+E)^{-1})
        =\frac{1}{\sigma_{\max}(I+E)}
        \geq \frac{1}{\|I+E\|}
        \geq \frac{1}{1+\epsilon}.
    \end{align}
    Therefore, $\kappa_2(\widehat H)\leq (1+\epsilon)/(1-\epsilon)$.
\end{proof}

\begin{lemma}\label{lem:proposed2}
    Let $A$ be a real symmetric $n\times n$ matrix with simple eigenvalues $\lambda_i$ ($i=1,\dots,n$).
    Let $p$ be a permutation of $\{1,\dots,n\}$.
    Define $X=(x^{(p_j)})_{j=1}^k\in\mathbb{R}^{n\times k}$ and assume $k<n$.
    Let $\widehat X\in\mathbb{R}^{n\times k}$ have full column rank.
    Suppose that Algorithm~\ref{alg:proposed} is applied to $A$ and $\widehat X$ in exact arithmetic.
    If 
    \begin{align}
        \epsilon<\sqrt{\frac{|\lambda_{p_j}|}{\eta(\epsilon)\|A\|}}
    \end{align}
    then it holds
    \begin{align}
        \|E(k+1:n,j)-\widetilde E(k+1:n,j)\|&<\frac{\max_{i>k} |\lambda_{p_i}|\epsilon+\omega(\epsilon)\|A\|\epsilon^2}{|\lambda_{p_j}|-\eta(\epsilon)\|A\|\epsilon^2}.
    \end{align}
\end{lemma}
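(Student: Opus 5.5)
We compare the exact identity for the lower block of $V$ with the formula the algorithm uses, column by column. Fix $j\le k$ and write $v_j^{(2)}=V(k+1\!:\!n,j)$. Since $V=\widehat H^\mathsf{T}(A\widehat X-\widehat X\widetilde D_1)$, the second block rows of \eqref{eq:HhatX-block} and \eqref{eq:HhatAX-block} give the exact identity
\begin{align}
v_j^{(2)}
= -E_{12}^\mathsf{T}(:,j)\lambda_{p_j} + \Delta_2(k\!+\!1\!:\!n,j) + \bigl(E_{21}(:,j)+E_{12}^\mathsf{T}(:,j)\bigr)\widetilde d_j - \Delta_1(k\!+\!1\!:\!n,j)\widetilde d_j .
\end{align}
Here $\Delta_2$ contains the term $-D_2E_{21}$ explicitly. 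The algorithm sets $\widetilde E(k\!+\!1\!:\!n,j)=v_j^{(2)}/\widetilde d_j$. Multiplying the error by $\widetilde d_j$ therefore gives
\begin{align}
\widetilde d_j\bigl(E(k\!+\!1\!:\!n,j)-\widetilde E(k\!+\!1\!:\!n,j)\bigr)
= E_{12}^\mathsf{T}(:,j)(\lambda_{p_j}-\widetilde d_j) + D_2E_{21}(:,j) - \Phi_2(k\!+\!1\!:\!n,j) + \Delta_1(k\!+\!1\!:\!n,j)\widetilde d_j .
\end{align}
The whole proof consists of bounding the right-hand side in norm and dividing by $|\widetilde d_j|$.

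The key term is $D_2E_{21}(:,j)$. By \eqref{eq:HTH-and-HTAH}, $D_2$ is the restriction of $A$ to the orthogonal complement of the target eigenvectors, so its eigenvalues are $\lambda_{p_{k+1}},\dots,\lambda_{p_n}$ and $\|D_2\|=\max_{i>k}|\lambda_{p_i}|$. Since $\|E_{21}(:,j)\|\le\epsilon$, this term is bounded by $\max_{i>k}|\lambda_{p_i}|\,\epsilon$, which is exactly the first-order numerator in the statement.

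The remaining terms are all $\mathcal O(\epsilon^2)$ and are handled with the earlier lemmas:
\begin{itemize}
\item For the Rayleigh-quotient term, the first estimate of Lemma~\ref{lem:ogita2} gives $|\lambda_{p_j}-\widetilde d_j|\le\eta(\epsilon)\|A\|\epsilon^2$. That estimate is really a per-column Rayleigh-quotient bound and does not need the gap hypothesis. The term is then bounded by $\eta\|A\|\epsilon^3$.
\item By Lemma~\ref{lem:ogita1}, $\|\Phi_2\|\le\chi\|A\|\epsilon^2$ and $\|\Delta_1\|\le\chi\epsilon^2$.
\item We use $|\widetilde d_j|\le\|A\|\,\|\widehat x_j\|^2/\|\widehat x_j\|^2=\|A\|$, or more crudely $|\widetilde d_j|\le|\lambda_{p_j}|+\eta\|A\|\epsilon^2$.
\end{itemize}
Collecting these, the second-order part is at most $(2\chi+\eta\epsilon+\cdots)\|A\|\epsilon^2$. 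This is dominated by $\omega(\epsilon)\|A\|\epsilon^2=(2\chi+2\eta\epsilon+\chi\eta\epsilon^2)\|A\|\epsilon^2$, so we may write the numerator as $\max_{i>k}|\lambda_{p_i}|\epsilon+\omega(\epsilon)\|A\|\epsilon^2$. If $|\widetilde d_j|$ is bounded by $|\lambda_{p_j}|+\eta\|A\|\epsilon^2$, the extra $\chi\eta\epsilon^4$ contribution also fits inside $\omega$.

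For the denominator, $|\widetilde d_j|\ge|\lambda_{p_j}|-\eta(\epsilon)\|A\|\epsilon^2$, and this is positive exactly under the hypothesis $\epsilon^2<|\lambda_{p_j}|/(\eta\|A\|)$. Dividing then gives the claimed bound. The inequality is strict because the hypothesis is strict, or because of the slack in $\omega$.

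The main obstacle is the bookkeeping. The lower blocks of $\Phi_1$ and $\Phi_2$, multiplied by $\widetilde d_j$, must be absorbed into the single constant $\omega$, and the term involving $E_{12}^\mathsf{T}$ must cancel exactly. That cancellation is exact only because $\widetilde D_1$ (not $D_1$) appears in $V$. I will verify it carefully and, if needed, split $\widetilde d_j=\lambda_{p_j}+(\widetilde d_j-\lambda_{p_j})$ before bounding.
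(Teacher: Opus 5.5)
Your proposal is correct and is essentially the paper's proof. The paper reaches the same column identity for $\widetilde d_j\bigl(E(k\!+\!1\!:\!n,j)-\widetilde E(k\!+\!1\!:\!n,j)\bigr)$ by combining $\Delta_1\widetilde D_1$ with an auxiliary $\widetilde\Delta_2$. It then uses the same bounds ($\|D_2\|\epsilon$ with $\|D_2\|=\max_{i>k}|\lambda_{p_i}|$, Lemma~\ref{lem:ogita1} for $\Phi_1,\Phi_2$, the Rayleigh-quotient estimate for the $\eta(\epsilon)\|A\|\epsilon^3$ term) and divides by $|\widetilde d_j|\ge|\lambda_{p_j}|-\eta(\epsilon)\|A\|\epsilon^2$.

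One correction to your closing remark: the $E_{12}^\mathsf{T}$ terms do not cancel exactly. They leave $E_{12}^\mathsf{T}(:,j)(\lambda_{p_j}-\widetilde d_j)$, which your displayed identity already contains and which you correctly bound by $\eta(\epsilon)\|A\|\epsilon^3$.
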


\begin{proof}
    From the definition of $\Delta_1$ in \eqref{eq:HhatX-block}, we have
    \begin{align}
	        \Delta_1=
	        \begin{pmatrix}
	            (E_{11}-\widetilde E_{11})+(E_{11}-\widetilde E_{11})^\mathsf{T}\\
	            (E_{21}-\widetilde E_{21})+(E_{12}-\widetilde E_{12})^\mathsf{T}
	        \end{pmatrix}
	    \end{align}
    Next, define $\widetilde \Delta_2$ by
    \begin{align}
        \widetilde \Delta_2:=\widehat H^\mathsf{T}A\widehat X-
        \begin{pmatrix}
            \widetilde D_1-\widetilde D_1E_{11}-E_{11}^\mathsf{T}\widetilde D_1\\
            -E_{12}^\mathsf{T}\widetilde D_1
        \end{pmatrix}
    \end{align}
    Then, by \eqref{eq:S},
    \begin{align}
        \widetilde\Delta_2=
        \begin{pmatrix}
            -\widetilde D_1(E_{11}-\widetilde E_{11})-(E_{11}-\widetilde E_{11})^\mathsf{T}\widetilde D_1\\
            -(E_{12}-\widetilde E_{12})\widetilde D_1
        \end{pmatrix}
    \end{align}
    and hence
    \begin{align}
        \Delta_1\widetilde D_1+\widetilde\Delta_2=
        \begin{pmatrix}
            \widetilde D_1(E_{11}-\widetilde E_{11})-(E_{11}-\widetilde E_{11})\widetilde D_1\\
            (E_{21}-\widetilde E_{21})\widetilde D_1
        \end{pmatrix}
    \end{align}
    holds.

    Next, from the definition of $\Delta_2$ in \eqref{eq:HhatAX-block},
    \begin{align}
        \widetilde \Delta_2-\Delta_2=
        \begin{pmatrix}
            (D_1-\widetilde D_1)-(D_1-\widetilde D_1)E_{11}-E_{11}^\mathsf{T}(D_1-\widetilde D_1)\\
            -E_{12}^\mathsf{T}(D_1-\widetilde D_1)
        \end{pmatrix}
    \end{align}
    holds. Therefore, by Lemma~\ref{lem:ogita2},
    \begin{align}
        \|\widetilde \Delta_2(k+1:n,1:k)-\Delta_2(k+1:n,1:k)\|&\leq \epsilon\|D_1-\widetilde D_1\|\leq \eta(\epsilon)\|A\|\epsilon^3
    \end{align}
    holds.
    
    Moreover, from \eqref{eq:HhatAX-block} and \eqref{eq:norm_phi2},
    \begin{align}
        \|\Delta_2(k+1:n,j)\|&\leq \|\Phi_2\|+\|D_2\|\|E_{21}\|\leq \chi(\epsilon)\|A\|\epsilon^2+\max_{k+1\leq i\leq n}|\lambda_{p_i}|\epsilon
    \end{align}
    and hence
    \begin{align}
        \|\widetilde \Delta_2(k+1:n,j)\|&\leq (\eta(\epsilon)\epsilon+\chi(\epsilon))\|A\|\epsilon^2+\max_{k+1\leq i\leq n}|\lambda_{p_i}|\epsilon
    \end{align}
    holds.

    Combining these bounds, we obtain
    \begin{align}
        &\|E(k+1:n,j)-\widetilde E(k+1:n,j)\|\\
        \leq\quad& \frac{\| \Delta_1\|\cdot\|\widetilde D_1\|+\|\widetilde \Delta_2(k+1:n,1:k)\|}{|\widetilde d_j|}\\
        \leq\quad& 
        \frac{
        \begin{aligned}
        \chi(\epsilon)\epsilon^2\cdot\max_i\widetilde d_i
        +(\eta(\epsilon)\epsilon+\chi(\epsilon))\|A\|\epsilon^2
        +\max_{i>k} |\lambda_{p_i}|\epsilon
        \end{aligned}
        }{\min_i|\widetilde d_i|}\\
        \leq\quad& \frac{\max_{k+1\leq i\leq n} |\lambda_{p_i}|\epsilon+\omega(\epsilon)\|A\|\epsilon^2}{|\lambda_{p_j}|-\eta(\epsilon)\|A\|\epsilon^2}\label{eq:E21}
    \end{align}
    as desired.

\end{proof}

\begin{corollary}\label{cor:proposed2}
    Let $A$ be a real symmetric $n\times n$ matrix with simple eigenvalues $\lambda_i$ ($i=1,\dots,n$).
    Let $p$ be a permutation of $\{1,\dots,n\}$.
    Define $X=(x^{(p_j)})_{j=1}^k\in\mathbb{R}^{n\times k}$ and assume $k<n$.
    Let $\widehat X\in\mathbb{R}^{n\times k}$ have full column rank.
    Suppose that Algorithm~\ref{alg:proposed} is applied to $A$ and $\widehat X$ in exact arithmetic.
    If 
    \begin{align}
        \epsilon<\sqrt{\frac{\min_{1\leq j\leq k}|\lambda_{p_j}|}{\eta(\epsilon)\|A\|}}
    \end{align}
    then it holds
    \begin{align}
        \|E(k+1:n,j)-\widetilde E(k+1:n,j)\|&<\frac{\max_{k+1\leq i\leq n} |\lambda_{p_i}|\epsilon+\omega(\epsilon)\|A\|\epsilon^2}{\min_{1\leq j\leq k}|\lambda_{p_j}|-\eta(\epsilon)\|A\|\epsilon^2}.
    \end{align}
\end{corollary}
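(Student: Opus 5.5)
The plan is to obtain Corollary~\ref{cor:proposed2} directly from Lemma~\ref{lem:proposed2} by taking the worst case over the target index $j$. Write $\lambda_{\min}:=\min_{1\le j\le k}|\lambda_{p_j}|$ and $\lambda_{\mathrm{out}}:=\max_{k+1\le i\le n}|\lambda_{p_i}|$.

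\textbf{Step 1: the hypothesis of the lemma holds for every $j$.} Since $\lambda_{\min}\le|\lambda_{p_j}|$ and $t\mapsto\sqrt{t}$ is increasing, the assumption
\[
\epsilon<\sqrt{\lambda_{\min}/(\eta(\epsilon)\|A\|)}
\]
implies
\[
\epsilon<\sqrt{|\lambda_{p_j}|/(\eta(\epsilon)\|A\|)}
\]
for each $1\le j\le k$. So Lemma~\ref{lem:proposed2} applies to every column $j$ and gives
\[
\|E(k+1:n,j)-\widetilde E(k+1:n,j)\|<\frac{\lambda_{\mathrm{out}}\,\epsilon+\omega(\epsilon)\|A\|\epsilon^2}{|\lambda_{p_j}|-\eta(\epsilon)\|A\|\epsilon^2}.
\]

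\textbf{Step 2: replace the $j$-dependent denominator by the uniform one.} The hypothesis gives $\lambda_{\min}-\eta(\epsilon)\|A\|\epsilon^2>0$. Together with $|\lambda_{p_j}|\ge\lambda_{\min}$, this yields
\[
|\lambda_{p_j}|-\eta(\epsilon)\|A\|\epsilon^2\ \ge\ \lambda_{\min}-\eta(\epsilon)\|A\|\epsilon^2>0.
\]
The numerator does not depend on $j$ and is nonnegative. Replacing the denominator by the smaller positive quantity therefore only enlarges the fraction, which gives the stated bound.

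\textbf{Where care is needed.} The only subtlety is keeping both denominators positive, so that taking reciprocals is order-reversing; this is exactly what the hypothesis guarantees. Note also that the proof of Lemma~\ref{lem:proposed2} already divides by $\min_i|\widetilde d_i|$, so its bound is in effect uniform in $j$. If needed, one can instead rerun that final chain of inequalities with $\lambda_{\min}$ in place of $|\lambda_{p_j}|$, using $|\widetilde d_i|\ge|\lambda_{p_i}|-\eta(\epsilon)\|A\|\epsilon^2$ from Lemma~\ref{lem:ogita2}.
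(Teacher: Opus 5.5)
Your proof is correct and follows the paper's route: the paper gives no separate proof, treating the corollary as immediate from Lemma~\ref{lem:proposed2}. Your argument supplies that step. The uniform hypothesis implies the per-column one, and replacing $|\lambda_{p_j}|$ by $\min_{1\le i\le k}|\lambda_{p_i}|$ in the positive denominator only enlarges the bound. You also check correctly that both denominators stay positive, which is the one point that needs care, and the lemma's own proof already passes through $\min_i|\widetilde d_i|$.
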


Combining Lemmas~\ref{lem:ogita2} and~\ref{lem:proposed2}, if $\min_{1\leq j\leq k}|\lambda_{p_j}|>0$, we have
\begin{align}
    \|E_L-\widetilde E_L\|&\leq \|E_{11}-\widetilde E_{11}\|+\|E_{21}-\widetilde E_{21}\|\\
    &=\frac{\max_{k+1\leq i\leq n} |\lambda_{p_i}|}{\min_{1\leq j\leq k}| \lambda_{p_j}|}\cdot\epsilon+\mathcal{O}(\epsilon^2).\label{eq:EL-tildeEL}
\end{align}
This bound yields the following theorem.

\begin{theorem}\label{thm:proposed}
    Let $A$ be a real symmetric $n\times n$ matrix with simple eigenvalues $\lambda_i$ ($i=1,\dots,n$).
    Let $p$ be a permutation of $\{1,\dots,n\}$.
    Define $X=(x^{(p_j)})_{j=1}^k\in\mathbb{R}^{n\times k}$ and assume $k<n$.
    Let $\widehat X\in\mathbb{R}^{n\times k}$ have full column rank.
    Suppose that Algorithm~\ref{alg:proposed} is applied to $A$ and $\widehat X$ in exact arithmetic.
    If 
    \begin{align}
        \epsilon:=\|E\|\leq \min
        \left(
        \sqrt{\frac{\min_{\substack{1\le i\le k\\ i\ne j}}|\lambda_{p_j}-\lambda_{p_i}|}{2\eta(\epsilon)k\|A\|}}, 
        \sqrt{\frac{|\lambda_{p_j}|}{\eta(\epsilon)\|A\|}}
        \right),\quad 1\leq j\leq k,
    \end{align}
    then it holds
    \begin{align}
        \frac{\|x^{(p_j)}-\widetilde x^{(p_j)}\|}{\|X-\widehat X\|}
        &\leq \frac{1}{1-2\epsilon}\Biggl(
        \frac{\omega(\epsilon)\sqrt{k}\|A\|\epsilon}{\min_{\substack{1\le i\le k\\ i\ne j}}|\lambda_{p_j}-\lambda_{p_i}|-2\eta(\epsilon)\|A\|\epsilon^2}\\
        &\qquad\qquad +
        \frac{\max_{k+1\leq i\leq n} |\lambda_{p_i}|+\omega(\epsilon)\|A\|\epsilon}{|\lambda_{p_j}|-\eta(\epsilon)\|A\|\epsilon^2}
        \Biggr).
    \end{align}
\end{theorem}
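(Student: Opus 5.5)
The plan is to express both the error after the update and the error before it in terms of the correction matrices, and then bound their ratio. Since $H=\widehat H(I+E)$ and $H^\mathsf{T}X=I_{n,k}$, we have $X=HI_{n,k}=\widehat H I_{n,k}+\widehat H E_L$. The algorithm builds $\widehat H$ from $\widehat X$ via Algorithm~\ref{alg:compwy} (or Algorithm~\ref{alg:compwy_lu}), so $\widehat H I_{n,k}=\widehat X$ and hence $X-\widehat X=\widehat H E_L$. Subtracting the update \eqref{eq:update-X} gives
\begin{align}
X-\widetilde X=\widehat H(E_L-\widetilde E_L),\qquad x^{(p_j)}-\widetilde x^{(p_j)}=\widehat H\bigl(E(:,j)-\widetilde E(:,j)\bigr).
\end{align}
This yields
\begin{align}
\|x^{(p_j)}-\widetilde x^{(p_j)}\|\le\|\widehat H\|\bigl(\|E(1\!:\!k,j)-\widetilde E(1\!:\!k,j)\|+\|E(k\!+\!1\!:\!n,j)-\widetilde E(k\!+\!1\!:\!n,j)\|\bigr)
\end{align}
and $\|X-\widehat X\|\ge\sigma_{\min}(\widehat H)\|E_L\|$. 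Dividing, the prefactor is $\kappa_2(\widehat H)$, which Lemma~\ref{lem:proposed1} bounds by $1/(1-2\epsilon)$ (note $\epsilon<1/100$ is in force).

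The remaining task is to relate each column error to $\|E_L\|$. For the lower block, Lemma~\ref{lem:proposed2} applies under the second branch of the hypothesis. It gives a bound of the form $(\max_{i>k}|\lambda_{p_i}|\epsilon+\omega\|A\|\epsilon^2)/(|\lambda_{p_j}|-\eta\|A\|\epsilon^2)$. Dividing by $\epsilon$ produces exactly the second term of the claim, provided I normalize by $\epsilon$ rather than $\|E_L\|$. So I will treat the denominator as $\epsilon$, using $\|E_L\|\le\epsilon$, which goes the wrong way. I expect the authors intend $\|X-\widehat X\|$ to be comparable with $\epsilon\,\sigma_{\min}(\widehat H)$, i.e.\ $\|E_L\|\approx\epsilon$. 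I would make this explicit: either state it as an identification, or note that $E$ can be chosen so that the relevant correction $E_L$ carries the norm, since $E_{12},E_{22}$ are fixed by the choice of $H$'s complement and by skew/symmetric structure, up to $\mathcal O(\epsilon^2)$. This is the main obstacle: justifying $\|E_L\|\gtrsim\epsilon$, or else reading the theorem as a bound with $\epsilon$ standing in for the error. I would first try to exploit the freedom in choosing the last $n-k$ columns of $H$ (any orthogonal complement basis diagonalizing into $D_2$ works) to minimize $\|E\|$, so that $\epsilon$ is essentially $\|E_L\|$ up to higher order.

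For the upper block, I apply Lemma~\ref{lem:ogita2} entrywise for $i\ne j$, $1\le i\le k$. Its hypothesis is implied by the first branch of the assumption, which is even stronger because of the factor $k$. This bounds each $|e_{ij}-\widetilde e_{ij}|$ by $\omega\|A\|\epsilon^2/(\min_{i\ne j}|\lambda_{p_j}-\lambda_{p_i}|-2\eta\|A\|\epsilon^2)$. The diagonal entry $e_{jj}-\widetilde e_{jj}$ is controlled by the $(j,j)$ entry of $\Delta_1$, which is $\mathcal O(\chi\epsilon^2)$ and can be absorbed into the same form. Summing $k$ such entries in the $2$-norm gives the factor $\sqrt k$. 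After dividing by $\epsilon$, this is the first term of the claim. Adding the two block contributions and multiplying by $1/(1-2\epsilon)$ then completes the estimate.
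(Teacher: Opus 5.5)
You follow the paper's proof step for step. The chain is $X-\widehat X=\widehat HE_L$, $\|x^{(p_j)}-\widetilde x^{(p_j)}\|\le\|\widehat H\|\,\|e_j-\widetilde e_j\|$, $\|X-\widehat X\|\ge\sigma_{\min}(\widehat H)\|E_L\|$, $\kappa_2(\widehat H)\le 1/(1-2\epsilon)$ from Lemma~\ref{lem:proposed1}, and then the column bounds of Lemma~\ref{lem:ogita2} (times $\sqrt k$) and Lemma~\ref{lem:proposed2}. Two smaller points are fine. Your treatment of the diagonal entry, $|e_{jj}-\widetilde e_{jj}|=|(\Delta_1)_{jj}|/2\le\chi(\epsilon)\epsilon^2/2$, absorbed using $\omega\ge2\chi$ and gaps at most $2\|A\|$, is correct and more careful than the paper. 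Also, $\epsilon<1/100$ is not a hypothesis of this theorem, but $\epsilon<1/2$ follows from the second branch because $\eta(\epsilon)\ge6$.

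The step you flag is exactly where the paper is silent as well. It passes from $\kappa_2(\widehat H)\|e_j-\widetilde e_j\|/\|E_L\|$ directly to the displayed right-hand side, which is the two column bounds divided by $\epsilon$. That requires $\|E_L\|\ge\epsilon$, whereas in fact $\|E_L\|\le\|E\|=\epsilon$ always. So you have found a genuine gap, shared with the paper, but your proposal does not close it.

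Your suggested repair does not work. Replacing the last $n-k$ columns $H_2$ of $H$ by $H_2Q$, with $Q$ orthogonal, replaces $E_{12}$ by $E_{12}Q$. Hence $\epsilon\ge\|E_{12}\|$, and $\|E_{12}\|$ is unaffected by that freedom. When $\widehat X$ is not orthonormal, $\|E_{12}\|$ can exceed $\|E_L\|$ by a fixed factor. This case is in scope: the theorem assumes only full column rank, and Lemma~\ref{lem:proposed1} treats $\widehat H$ as merely nonsingular.

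For example, take $k=1$, $\widehat x=\alpha x$ with $\alpha>0$ and $x_1=0$, and $\widehat H$ from Algorithm~\ref{alg:compwy}. Then $E_L=(\alpha^{-1}-1)e_1$, but $\|E_{12}\|=|1-\alpha^{-2}|$ for every admissible $H$. Hence $\epsilon\ge(1+\alpha^{-1})\|E_L\|\approx2\|E_L\|$, and identifying $\epsilon$ with $\|E_L\|$ is unavailable.

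What this route does prove is a weaker inequality. In Lemma~\ref{lem:proposed2}, keep $\|D_2E_{21}(:,j)\|\le\max_{i>k}|\lambda_{p_i}|\,\|E_L\|$ instead of bounding $\|E_{21}\|$ by $\epsilon$. The leading term divided by $\|E_L\|$ then still gives $\max_{i>k}|\lambda_{p_i}|/(|\lambda_{p_j}|-\eta(\epsilon)\|A\|\epsilon^2)$. Every quadratic remainder contributes $\mathcal{O}(\epsilon^2)/\|E_L\|$. The result is the stated inequality with each $\omega(\epsilon)\|A\|\epsilon$ in the numerators multiplied by $\tau:=\epsilon/\|E_L\|\ge1$. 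The displayed form therefore needs an additional hypothesis such as $\|E_L\|=\|E\|$. The asymptotic rate follows from this argument only when $\tau$ stays bounded as $\epsilon\to0$.
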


\begin{proof}
    From the definition of $E_L$ in $X=\widehat X+\widehat HE_L$, we have
	    \begin{align}
	        \|X-\widehat X\|=\|X-\widehat X-\widehat HE_L+\widehat HE_L\|\geq\sigma_{\min}(\widehat H)\cdot\|E_L\|
	    \end{align}
	    Also, from the definition of $\widetilde E_L$ in $\widetilde X=\widehat X+\widehat H\widetilde E_L$,
    \begin{align}
        \|x^{(p_j)}-\widetilde x^{(p_j)}\|&\leq \|x^{(p_j)}-\widehat x^{(p_j)}-\widehat H\widetilde e_j\|\leq \|\widehat H\|\cdot \|e_j-\widetilde e_j\|
    \end{align}
    holds. Therefore, by Lemma~\ref{lem:proposed1} and \eqref{eq:EL-tildeEL},
    \begin{align}
        \frac{\|x^{(p_j)}-\widetilde x^{(p_j)}\|}{\|X-\widehat X\|}&\leq \kappa_2(\widehat H)\frac{\|e_j-\widetilde e_j\|}{\|E_L\|}\\
        &\leq \frac{1}{1-2\epsilon}\Biggl(
        \frac{\omega(\epsilon)\sqrt{k}\|A\|\epsilon}{\min_{\substack{1\le i\le k\\ i\ne j}}|\lambda_{p_j}-\lambda_{p_i}|-2\eta(\epsilon)\|A\|\epsilon^2}\\
        &\qquad\qquad +
        \frac{\max_{k+1\leq i\leq n} |\lambda_{p_i}|+\omega(\epsilon)\|A\|\epsilon}{|\lambda_{p_j}|-\eta(\epsilon)\|A\|\epsilon^2}
        \Biggr)
    \end{align}
    follows.
\end{proof}

We now give a remark on linear convergence. When $\epsilon$ is sufficiently small,
\begin{align}
    \limsup_{\epsilon\to 0}\frac{\|x^{(p_j)}-\widetilde x^{(p_j)}\|}{\|X-\widehat  X\|}&\leq \frac{\max_{k+1\leq i\leq n} |\lambda_{p_i}|}{|\lambda_{p_j}|}
\end{align}
holds.
Moreover, using Corollary~\ref{cor:proposed2},
\begin{align}
    \epsilon\leq \min
    \left(
    \sqrt{\frac{\min_{\substack{1\le i\le k\\ i\ne j}}|\lambda_{p_j}-\lambda_{p_i}|}{2\eta(\epsilon)k\|A\|}}, 
    \sqrt{\frac{\min_{1\leq j\leq k}|\lambda_{p_j}|}{\eta(\epsilon)\|A\|}}
    \right),\quad 1\leq j\leq k,
\end{align}
we immediately obtain
\begin{align}
    \limsup_{\epsilon\to 0}\frac{\|X-\widetilde X\|}{\|X-\widehat  X\|}&\leq \frac{\max_{i>k} |\lambda_{p_i}|}{\min_{j\leq k}|\lambda_{p_j}|}.
\end{align}
Define
\[
\gamma := \frac{\max_{k+1\leq i\leq n} |\lambda_{p_i}|}{\min_{1\leq j\leq k}|\lambda_{p_j}|},\quad \min_{1\leq j\leq k}|\lambda_{p_j}|>0.
\]
Then a necessary condition for the convergence of Algorithm~\ref{alg:proposed} is $\gamma<1$.
When the method converges, the convergence rate is linear and depends on $\gamma$.
Section~\ref{sec:sufficient-conditions} records a conservative sufficient condition (allowing overestimation) and discusses how it motivates a notion of ``difficult'' instances, including cases where eigenvalues are not clustered but the initial approximation is too inaccurate for the refinement to behave as a local contraction.

\subsection{Sufficient Conditions}\label{sec:sufficient-conditions}

We next discuss a conservative sufficient condition for Algorithm~\ref{alg:proposed} to converge (allowing overestimation).

\begin{theorem}\label{thm:suffcond}
    Let $A$ be a real symmetric $n\times n$ matrix with simple eigenvalues $\lambda_i, i=1,2,\dots,n$, and let $p$ be a permutation of $\{1,2,\dots,n\}$.
    Define $X=(x^{(p_1)},x^{(p_2)},\dots,x^{(p_k)})\in\mathbb{R}^{n\times k}$ with $k<n$.
    For a given full column rank $\widehat X\in\mathbb{R}^{n\times k}$, suppose that Algorithm~\ref{alg:proposed} is applied to $A$ and $\widehat X$ in exact arithmetic.
    Suppose $\min_{1\leq j\leq k}|\lambda_{p_j}|>0$.
    Let $\rho_{1}$ and $\rho_{2}$ be positive scalars satisfying
    \begin{align}
        \epsilon=\frac{1}{\rho_1}\cdot\sqrt{\frac{\min_{i\not=j}|\lambda_{p_j}-\lambda_{p_i}|}{2\eta(\epsilon)\sqrt{k}\|A\|}}=
        \frac{1}{\rho_2}\cdot\sqrt{\frac{\min_{1\leq j\leq k}|\lambda_{p_j}|}{\eta(\epsilon)\|A\|}}.
    \end{align}
    If
    \begin{align}
        \epsilon<\frac{1}{100}\text{\quad and\quad }
        \frac{\max_{k+1\leq j\leq n} |\lambda_{p_j}|}{\min_{1\leq i\leq k}| \lambda_{p_i}|}
        &\leq\frac{\rho_2^2-1}{\rho_2^2}\left(\frac{98}{100}-\frac{46}{\rho_1^2-1}\right)-\frac{92}{\rho_2^2},
    \end{align}
    then it holds
    \begin{align}
        \frac{\|X-\widetilde X\|}{\|X-\widehat X\|}
        &<1.
    \end{align}
\end{theorem}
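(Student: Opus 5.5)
The plan is to reuse the chain of estimates behind Theorem~\ref{thm:proposed}, but in matrix form and with every $\epsilon$-dependent quantity replaced by a numerical constant valid for $\epsilon<1/100$. Then the parametrization through $\rho_1,\rho_2$ turns the condition $\|X-\widetilde X\|<\|X-\widehat X\|$ into the stated inequality.

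The first step is the matrix analogue of the proof of Theorem~\ref{thm:proposed}. From $X-\widehat X=\widehat HE_L$ and $X-\widetilde X=\widehat H(E_L-\widetilde E_L)$ I get
\begin{align}
\frac{\|X-\widetilde X\|}{\|X-\widehat X\|}\le \kappa_2(\widehat H)\frac{\|E_L-\widetilde E_L\|}{\|E_L\|}.
\end{align}
Lemma~\ref{lem:proposed1} gives $\kappa_2(\widehat H)<1/(1-2\epsilon)<100/98$.

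For the denominator, note that $\Delta_1$ is the leading block of $\Phi_1$ and the top block of \eqref{eq:HhatX-block} is symmetric. I will work with the quantity $\|E_L\|$ relative to $\epsilon$: since the ratio must be controlled by $\epsilon$, I plan to write $\|E_L-\widetilde E_L\|$ as a bound of the form $c\,\epsilon$ and compare it with $\|E_L\|$. Concretely I will use $\|E_L\|$ together with bounds proportional to $\|E_L\|$, following the convention of \eqref{eq:EL-tildeEL}, i.e.\ treating $\epsilon$ as the size of the error being contracted.

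The second step bounds the two blocks of $E_L-\widetilde E_L$. For the $E_{11}$ block, Lemma~\ref{lem:ogita2} bounds each entry. The Frobenius bound over $k^2$ entries gives a factor $k$, which is reduced to $\sqrt k$ appropriately; this is why $\sqrt k$ appears in the definition of $\rho_1$. Substituting $\min|\lambda_{p_j}-\lambda_{p_i}|=2\eta\sqrt k\|A\|\rho_1^2\epsilon^2$ makes $\|A\|$ and the gap cancel, leaving
\begin{align}
\|E_{11}-\widetilde E_{11}\|\le \frac{\omega\,\epsilon}{2\eta(\rho_1^2-1)}.
\end{align}
For the $E_{21}$ block, Corollary~\ref{cor:proposed2} applies. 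Substituting $\min_j|\lambda_{p_j}|=\eta\|A\|\rho_2^2\epsilon^2$, i.e.\ $\|A\|\epsilon^2=\min|\lambda|/(\eta\rho_2^2)$, gives
\begin{align}
\|E_{21}-\widetilde E_{21}\|\le \frac{\rho_2^2}{\rho_2^2-1}\Bigl(\gamma+\frac{\omega}{\eta\rho_2^2}\Bigr)\epsilon .
\end{align}
Here $\gamma$ is the separation ratio. Both substitutions are legitimate exactly when $\rho_1,\rho_2>1$, which is implied by the hypothesis making the right-hand side positive.

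The third step inserts the constants $\chi<3.05$, $\eta<7$, $\omega<6.4$. The sum of the two block bounds multiplied by $100/98$ should be $<1$. Rearranging gives
\begin{align}
\gamma<\frac{\rho_2^2-1}{\rho_2^2}\Bigl(\frac{98}{100}-\frac{c_1}{\rho_1^2-1}\Bigr)-\frac{c_2}{\rho_2^2}.
\end{align}
I then verify that the crude constants used in the paper ($46$ and $92$) dominate the $c_1,c_2$ that arise. These come from products of $\omega,\eta,\chi$, possibly after using the cruder bounds in the intermediate steps of Lemma~\ref{lem:proposed2}, e.g.\ $\|\Delta_1\|\|\widetilde D_1\|$ bounded via $\chi\|A\|$.

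The main obstacle is this last bookkeeping. The stated constants must be matched while being careful about which $\epsilon$-dependence is absorbed where. It also requires that the $k$ versus $\sqrt k$ and the column-wise versus whole-block norms in Lemma~\ref{lem:ogita2} and Corollary~\ref{cor:proposed2} combine consistently. I would first try the straightforward triangle inequality $\|E_L-\widetilde E_L\|\le\|E_{11}-\widetilde E_{11}\|+\|E_{21}-\widetilde E_{21}\|$ with generous constants, and then check that the result is at most the stated right-hand side.
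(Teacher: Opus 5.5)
Your outline is the paper's proof step for step: $\kappa_2(\widehat H)<100/98$ from Lemma~\ref{lem:proposed1}, Lemma~\ref{lem:ogita2} with the $\rho_1$-substitution for the $E_{11}$ block, Corollary~\ref{cor:proposed2} with the $\rho_2$-substitution for the $E_{21}$ block, then a linear inequality solved for $\gamma$. The gap is that your substitution is miscomputed, and this is exactly the step that fails.

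With $\min_{i\ne j}|\lambda_{p_j}-\lambda_{p_i}|=2\eta\sqrt{k}\,\|A\|\rho_1^2\epsilon^2$, the entrywise bound of Lemma~\ref{lem:ogita2} becomes
\[
\frac{\omega\|A\|\epsilon^2}{2\eta\|A\|\epsilon^2(\sqrt{k}\rho_1^2-1)}=\frac{\omega}{2\eta(\sqrt{k}\rho_1^2-1)} .
\]
So $\epsilon^2$ cancels completely, and $\sqrt{k}$ times this is at most $\omega/(2\eta(\rho_1^2-1))$, with no factor $\epsilon$. Likewise, Corollary~\ref{cor:proposed2} gives $\frac{\rho_2^2}{\rho_2^2-1}\bigl(\gamma\epsilon+\frac{\omega}{\eta\rho_2^2}\bigr)$, where only the $\gamma$-term carries $\epsilon$. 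After dividing by $\|E_L\|$ identified with $\epsilon$, the two contributions are
\[
\frac{\omega}{2\eta\,\epsilon\,(\rho_1^2-1)}
\quad\text{and}\quad
\frac{1}{\rho_2^2-1}\Bigl(\rho_2^2\gamma+\frac{\omega}{\eta\,\epsilon}\Bigr).
\]
The constants your block bounds produce ($c_1=\omega/(2\eta)\approx 1/2$, $c_2=\omega/\eta\approx 1$) are artifacts of the slip. The true coefficients contain $1/\epsilon$, and $\epsilon<1/100$ only gives $1/\epsilon>100$, so no $\epsilon$-independent constants can dominate them.

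The paper's $46$ and $92$ arise as $100\cdot\omega/(2\eta)$ and $100\cdot\omega/\eta$ with numerical values inserted for the ratios. That is, they come from writing $0.46/\epsilon\le 46$, which would require $\epsilon\ge 1/100$. So matching the paper's constants does not close this step either. What these estimates actually support is a condition that keeps the $1/\epsilon$. It requires $\epsilon$ to be small relative to $\min_{i\ne j}|\lambda_{p_j}-\lambda_{p_i}|/(\sqrt{k}\|A\|)$ and to $\min_j|\lambda_{p_j}|/\|A\|$ linearly, not under a square root.

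Three further steps need an argument rather than a convention.

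First, you may not replace $\|E_L\|$ by $\epsilon$. Only $\|E_L\|\le\|E\|=\epsilon$ holds, which is the wrong direction for an upper bound on $\|E_L-\widetilde E_L\|/\|E_L\|$. The paper makes the same identification (already in Theorem~\ref{thm:proposed}). A complete proof needs the second-order terms bounded by a multiple of $\epsilon\|E_L\|$ rather than $\epsilon^2$. This is available, for instance, when $\widehat X$ has orthonormal columns so that $\widehat H$ is orthogonal: then $E+E^\mathsf{T}+E^\mathsf{T}E=O_n$, which gives $\Phi_1(1\!:\!n,1\!:\!k)=-E^\mathsf{T}E_L$ and $\|E_{12}\|\le(1+\epsilon)\|E_L\|$.

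Second, the reduction from $k$ to $\sqrt{k}$ is not available for the whole $E_{11}$ block. A $k\times k$ matrix with entries bounded by $b$ can have spectral norm $kb$. The quantity $\sqrt{k}\,b$ is only the column bound used in Theorem~\ref{thm:proposed}, so passing from columns to $\|X-\widetilde X\|$ costs another $\sqrt{k}$.

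Third, $\rho_1,\rho_2>1$ is not implied by the hypothesis. For $\rho_1^2=1/2$ the term $-46/(\rho_1^2-1)$ equals $+92$, and the right-hand side then exceeds $1$ once $\rho_2\ge 2$. So $\rho_1,\rho_2>1$ must be assumed; it is also what Corollary~\ref{cor:proposed2} and the denominators $\rho_i^2-1$ require.
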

\begin{proof}

First, substituting
\begin{align}
    \epsilon=\frac{1}{\rho_1}\cdot\sqrt{\frac{\min_{i\not=j}|\lambda_{p_j}-\lambda_{p_i}|}{2\eta(\epsilon)\sqrt{k}\|A\|}}
\end{align}
into \eqref{eq:E11} gives
\begin{align}
    \frac{\omega(\epsilon)\sqrt{k}\|A\|\epsilon}{\min_{i\not=j}|\lambda_{p_j}-\lambda_{p_i}|-2\eta(\epsilon)\|A\|\epsilon^2}&=
    \frac{\frac{\omega(\epsilon)}{2\eta(\epsilon)}\epsilon}{\frac{\min_{i\not=j}|\lambda_{p_j}-\lambda_{p_i}|}{2\eta(\epsilon)\sqrt{k}\|A\|}-\frac{\epsilon^2}{\sqrt{k}}}
    \leq
    \frac{0.46\epsilon}{\epsilon^2(\rho_1^2-1)}\\
    &\leq \frac{46}{\rho_1^2-1}
	\end{align}

	Next, substituting
\begin{align}
    \epsilon=\frac{1}{\rho_2} \sqrt{\frac{\min_{1\leq i\leq k}|\lambda_{p_i}|}{\eta(\epsilon)\|A\|}}
\end{align}
into \eqref{eq:E21} yields
\begin{align}
    \frac{\frac{\max_{k+1\leq j\leq n} |\lambda_{p_j}|+\omega(\epsilon)\|A\|\epsilon}{\min_{1\leq i\leq k}| \lambda_{p_i}|}}{1-\frac{\eta(\epsilon)\|A\|}{\min_{1\leq i\leq k}| \lambda_{p_i}|}\epsilon^2}
    &=
    \frac{\frac{\max_{k+1\leq j\leq n} |\lambda_{p_j}|}{\min_{1\leq i\leq k}| \lambda_{p_i}|}+100\frac{\omega(\epsilon)}{\eta(\epsilon)\rho_2^2}}{1-\frac{1}{\rho_2^2}}\\
    &<
    \frac{1}{\rho_2^2-1}
	    \left(\rho_2^2\cdot\frac{\max_{k+1\leq j\leq n} |\lambda_{p_j}|}{\min_{1\leq i\leq k}| \lambda_{p_i}|}+91.5\right)
	\end{align}

	Therefore,
\begin{align}
    \frac{\|X-\widetilde X\|}{\|X-\widehat X\|}&<
    \frac{100}{98}\left(\frac{46}{\rho_1^2-1}+\frac{1}{\rho_2^2-1}
    \left(\rho_2^2\cdot\frac{\max_{k+1\leq j\leq n} |\lambda_{p_j}|}{\min_{1\leq i\leq k}| \lambda_{p_i}|}+91.5\right)\right)=:\beta
\end{align}
holds. 
Solving $\beta\leq1$ for ${\max_{k+1\leq j\leq n} |\lambda_{p_j}|}/{\min_{1\leq i\leq k}| \lambda_{p_i}|}$ gives
\begin{align}
	\frac{\max_{k+1\leq j\leq n} |\lambda_{p_j}|}{\min_{1\leq i\leq k}| \lambda_{p_i}|}
    \leq
    \frac{\rho_2^2-1}{\rho_2^2}\left(\frac{98}{100}-\frac{46}{\rho_1^2-1}\right)-\frac{92}{\rho_2^2}=:\alpha.\label{eq:alpha}
\end{align}

\end{proof}

Theorem~\ref{thm:suffcond} indicates that the proposed method (Algorithm~\ref{alg:proposed}) is applicable if at least $\alpha<1$, and larger $\alpha$ is better for the proposed method.
Table~\ref{tab:a} lists the resulting values of $\alpha$ for representative choices of $\rho_1$ and $\rho_2$.
\begin{table}[htbp]
\centering
\caption{The values of $\alpha$; sufficient condition for the convergence of the proposed method}
\label{tab:a}
\begin{tabular}{rcccc}
\toprule
$\rho_1\backslash \rho_2$ & 10 & 100 & 1,000 & 10,000\\
\midrule
10     & (-0.4144) & 0.5061 & 0.5153 & 0.5154\\
100    & 0.0456 & 0.9661 & 0.9753 & 0.9754\\
1,000  & 0.0502 & 0.9707 & 0.9799 & 0.9800\\
10,000 & 0.0502 & 0.9707 & 0.9799 & 0.9800\\
\bottomrule
\end{tabular}
\end{table}

When $\rho_1=\rho_2=10$, $\alpha$ becomes negative, so the sufficient condition cannot be satisfied.
If either $\rho_1=10$ or $\rho_2=10$, the sufficient condition suggests that only a very limited class of problems can be handled.
In contrast, when $\rho_1,\rho_2\geq 100$, the bound indicates that the proposed method can still be applicable even when ${\max_{k+1\leq j\leq n} |\lambda_{p_j}|}/{\min_{1\leq i\leq k}| \lambda_{p_i}|}$ is close to $1$.

It suggests that refinement can fail not only for tightly clustered targeted eigenvalues but also when the initial approximation is too inaccurate (large $\epsilon$), even if the targeted eigenvalues are well separated.
\subsection{Handling Clustered Eigenvalues}

We consider the case where $A$ has clustered eigenvalues, i.e., $\min_{1\leq i\ne j\leq k}|\lambda_{p_j}-\lambda_{p_i}|$ is extremely small.
The proposed framework can incorporate the strategies in~\cite{ogita2019iterative,shiroma2019tracking}.
To avoid unnecessary technicalities, we describe a simplified version of the approach in~\cite{shiroma2019tracking}.

Given an approximate eigenvector matrix $\widehat X\in\mathbb{R}^{n\times k}$, consider the generalized eigenvalue problem
\begin{align}
    (\widehat X^\mathsf{T}A\widehat X)S=(\widehat X^\mathsf{T}\widehat X)S\widetilde D
\end{align}
and set $Z\leftarrow \widehat XS$. Then
\begin{align}
    Z^\mathsf{T}Z=I_k,\quad Z^\mathsf{T}AZ=\widetilde D_1.
\end{align}

Using this property, we update $\widehat X\leftarrow Z$ and apply Algorithm~\ref{alg:proposed}, which yields
\begin{align}
    \widetilde e_{ij}=
    \begin{cases}
        0, & 1\leq i,j\leq k\\
        v_{ij}/\widetilde d_j, & k+1\leq i\leq n, 1\leq j\leq k
    \end{cases}
    \end{align}
That is, the refinement step avoids denominators of the form $\widetilde d_j-\widetilde d_i$.

In practice, we compute an approximation $\widehat S$ of $S$, so a separate analysis is required to quantify how tightly clustered eigenvalues can be handled.
Moreover, when $k$ is large, it is necessary---as in~\cite{shiroma2019tracking}---to extract only the eigenvectors associated with the clustered eigenvalues and solve the Rayleigh--Ritz equations on that subspace.

\section{Numerical Experiments}\label{sec:numerical-experiments}

\subsection{Experimental Setup}

All numerical computations were executed using MATLAB~R2025b.

Let $K$ denote the number of eigenvectors used in both the initial approximation and the iterative refinement.
In contrast, convergence is judged using the size of the correction matrix for the leading $k$ eigenvectors.
The approximate eigenvectors are computed by
\begin{verbatim}
    [X,D] = eigs(single(A),K); X = double(X);
\end{verbatim}
which computes the $K$ eigenpairs with the largest absolute eigenvalues in single precision and then converts the eigenvectors to double precision.
All computations in the iterative refinement are carried out in double precision to improve the accuracy of these $K$ eigenvectors.

This MATLAB implementation is intended primarily to validate convergence behavior and attainable accuracy.
Since it is not tuned for performance (e.g., optimized sparse matrix--matrix kernels and accelerator offloading are not exploited), we do not attempt to draw hardware-independent performance conclusions.
The main motivation is scenarios where a low-precision or low-cost initial solve is available and only a subset of eigenvectors requires additional accuracy.

For large-scale problems, the exact eigenvectors are not available, so we evaluate accuracy primarily through residual-based quantities.
In particular, we report the relative residual norm $\|A\widehat X-\widehat X\widetilde D_1\|_F/\|A\|$ and the correction size $\|\widetilde E_L\|_F$.
For symmetric problems, Davis--Kahan type bounds relate residual norms to forward error through eigenvalue gaps; when eigenvalues are tightly clustered, it is more meaningful to compare invariant subspaces (e.g., via principal angles) than individual eigenvector bases~\cite{davis1970rotation}.

We introduce $K$ because, when $\epsilon$ is sufficiently small (see Section~\ref{sec:convergence-analysis}),
\begin{align}
    \frac{\|x^{(p_j)}-\widetilde x^{(p_j)}\|}{\|X-\widehat X\|}
    \approx
    \frac{\max_{K+1\leq i\leq n}|\lambda_{p_i}|}{|\lambda_{p_j}|}
    \le
    \frac{\max_{k+1\leq i\leq n}|\lambda_{p_i}|}{|\lambda_{p_j}|},
\end{align}
so taking a larger $K$ can potentially accelerate convergence.
On the other hand, a larger $K$ increases the per-iteration cost; therefore, there is a trade-off between iteration count and total computation time.

To complement the residual-based evaluation on large-scale sparse problems, we report convergence histories for two representative matrices (Figure~\ref{fig:conv:sparse}) and summarize iteration counts and correction norms for several SuiteSparse matrices (Table~\ref{tab:sparse1} and Table~\ref{tab:sparse1_precond}).
For a smaller dense test problem where a reference invariant subspace can be computed by \texttt{eig}, we assess forward error via the sine of the largest principal angle between the computed and reference subspaces (Table~\ref{tab:dense_forward_error}).

\subsection{Dense matrices}

A real symmetric matrix $A$ is generated using MATLAB's built-in \texttt{gallery} routine (option \texttt{'randsvd'}).
The resulting matrix has a condition number of approximately $10^5$ and a spectral norm of about $1$.

\begin{figure}[htbp]
    \centering
    \subfloat[\texttt{mode=3}]{
        \begin{minipage}[b]{0.45\textwidth}
            \centering
            \includegraphics[width=\linewidth]{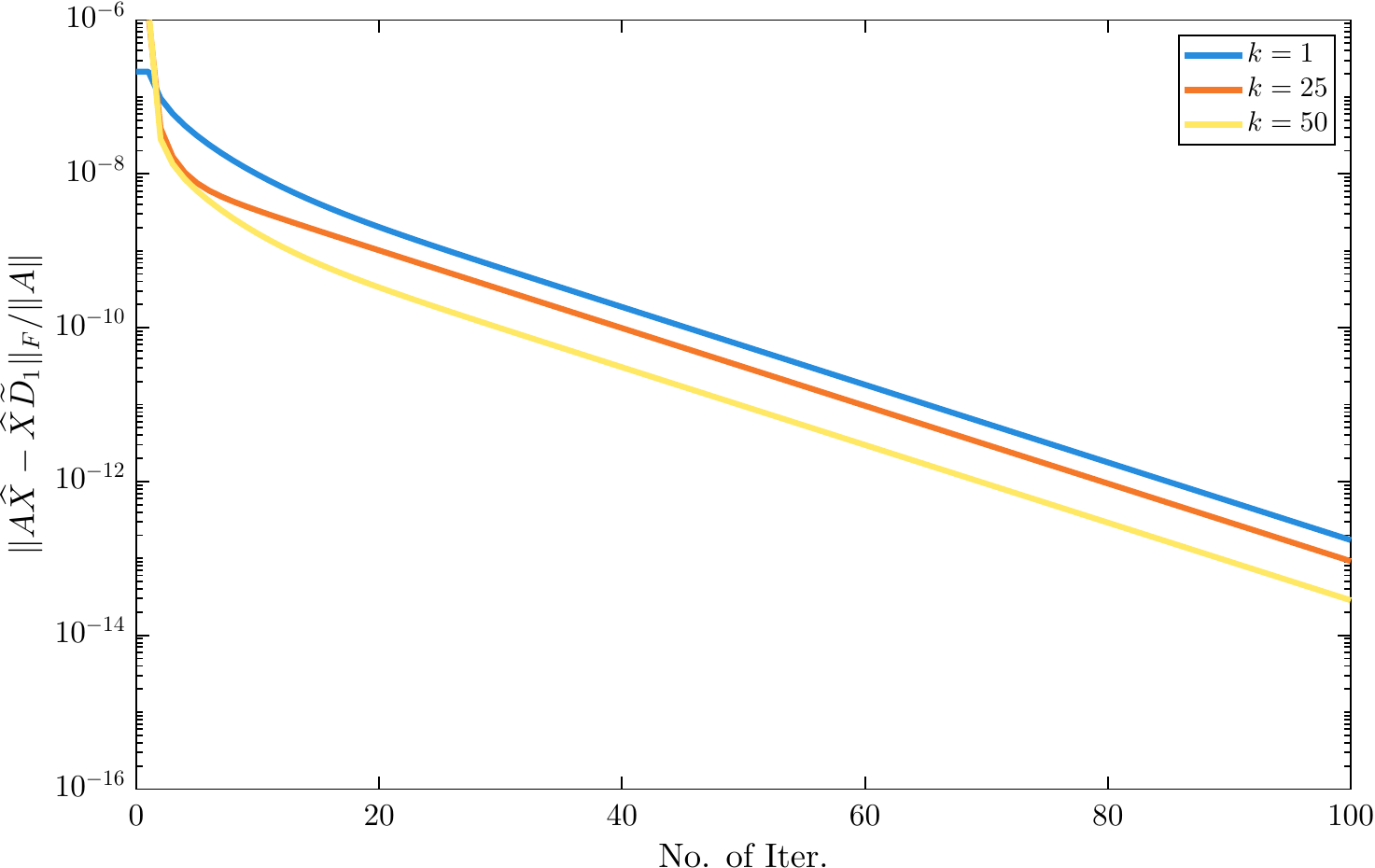}
        \end{minipage}
    }
    \hfill
    \subfloat[\texttt{mode=4}]{
        \begin{minipage}[b]{0.45\textwidth}
            \centering
            \includegraphics[width=\linewidth]{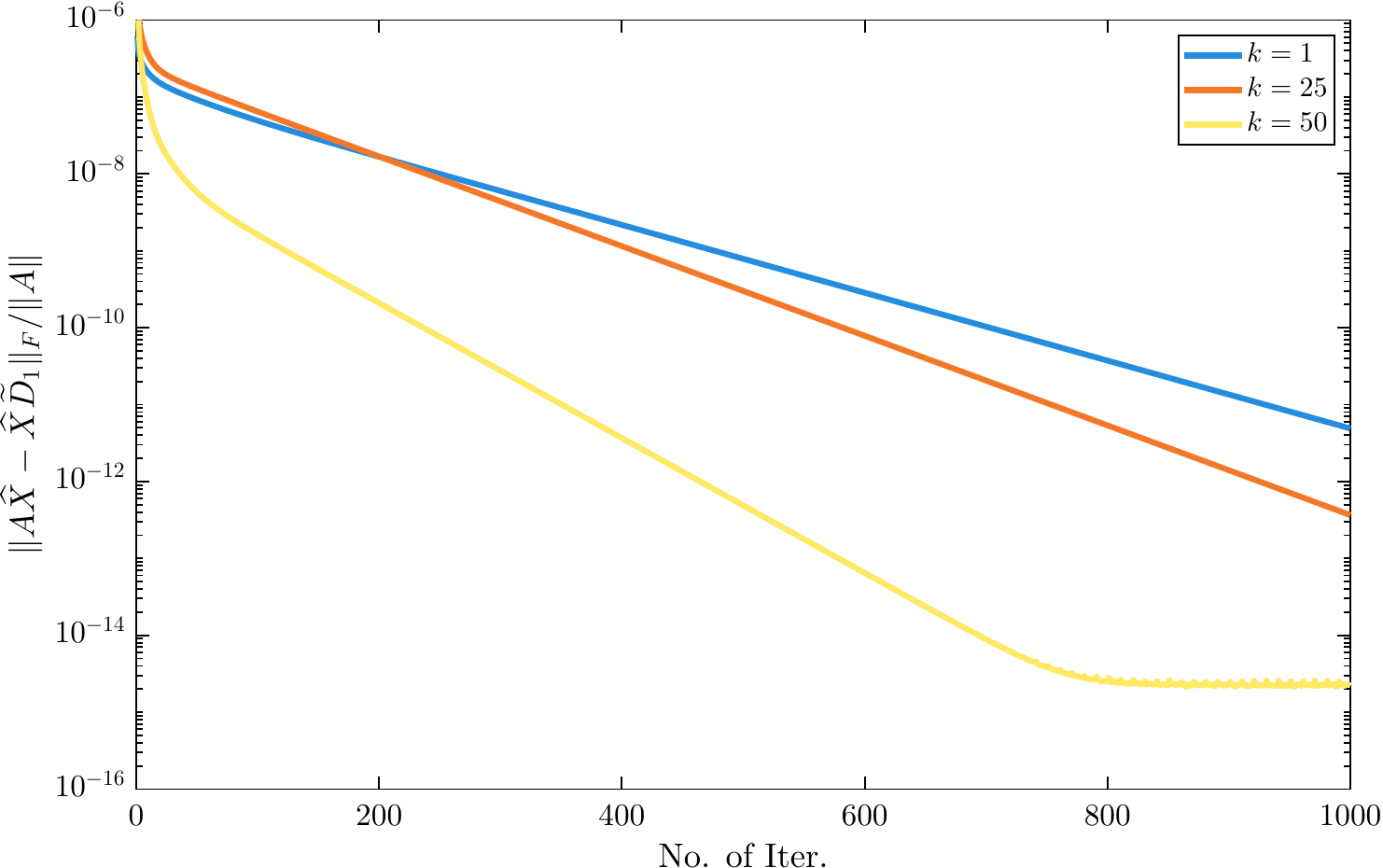}
        \end{minipage}
    }
    \caption{Convergence history of the relative residual norm $\|A\widehat X-\widehat X\widetilde D_1\|_F/\|A\|$ with $k=1, 25$, and $50$ for \texttt{n=100} (with $K=k$). Subfigures (a) and (b) correspond to \texttt{mode=3} and \texttt{mode=4}, respectively.}
    \label{fig:sym:conv:K}
\end{figure}

Figure~\ref{fig:sym:conv:K} shows the convergence history of the proposed method when the number of refined eigenvectors $k$ is varied (with $K=k$).
For \texttt{mode=3}, larger $k$ reduces the iteration count, while the asymptotic slope is essentially unchanged.
For \texttt{mode=4}, larger $k$ again reduces the iteration count and also leads to a modest improvement in the convergence rate.

Next, we examine how the choice of $K$ affects convergence for the leading $k=5$ eigenpairs $(\widehat{\lambda}_{p_j}, \widehat{x}^{(p_j)})$ ($j=1,\dots,5$).
Figure~\ref{fig:sym:conv:mode4} shows the per-eigenpair residual histories for $K\in\{5,10,20\}$.
For both modes, increasing $K$ tends to improve convergence for those eigenpairs whose eigenvalue gaps are smaller, consistent with the role of the $\max_{i\ge K+1}|\lambda_{p_i}|/|\lambda_{p_j}|$ factor in the analysis.

\begin{figure}[htbp]
    \centering
    % Use a tabular layout to ensure a 3-by-2 grid (avoid line wrapping).
    \setlength{\tabcolsep}{2pt}
    \begin{tabular}{@{}ccc@{}}
        \subfloat[$K=5$, \texttt{mode=3}]{\includegraphics[width=0.28\textwidth]{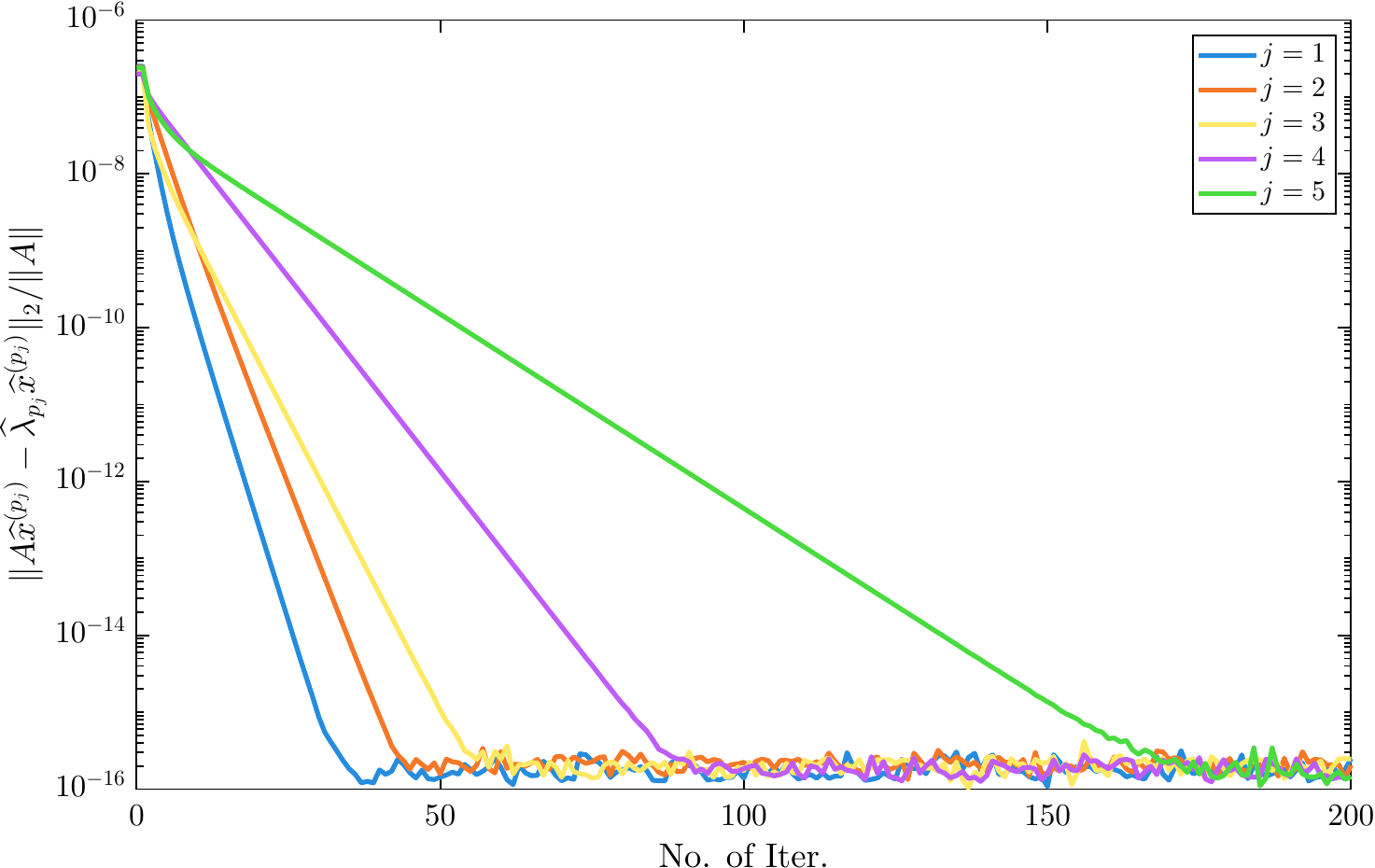}} &
        \subfloat[$K=10$, \texttt{mode=3}]{\includegraphics[width=0.28\textwidth]{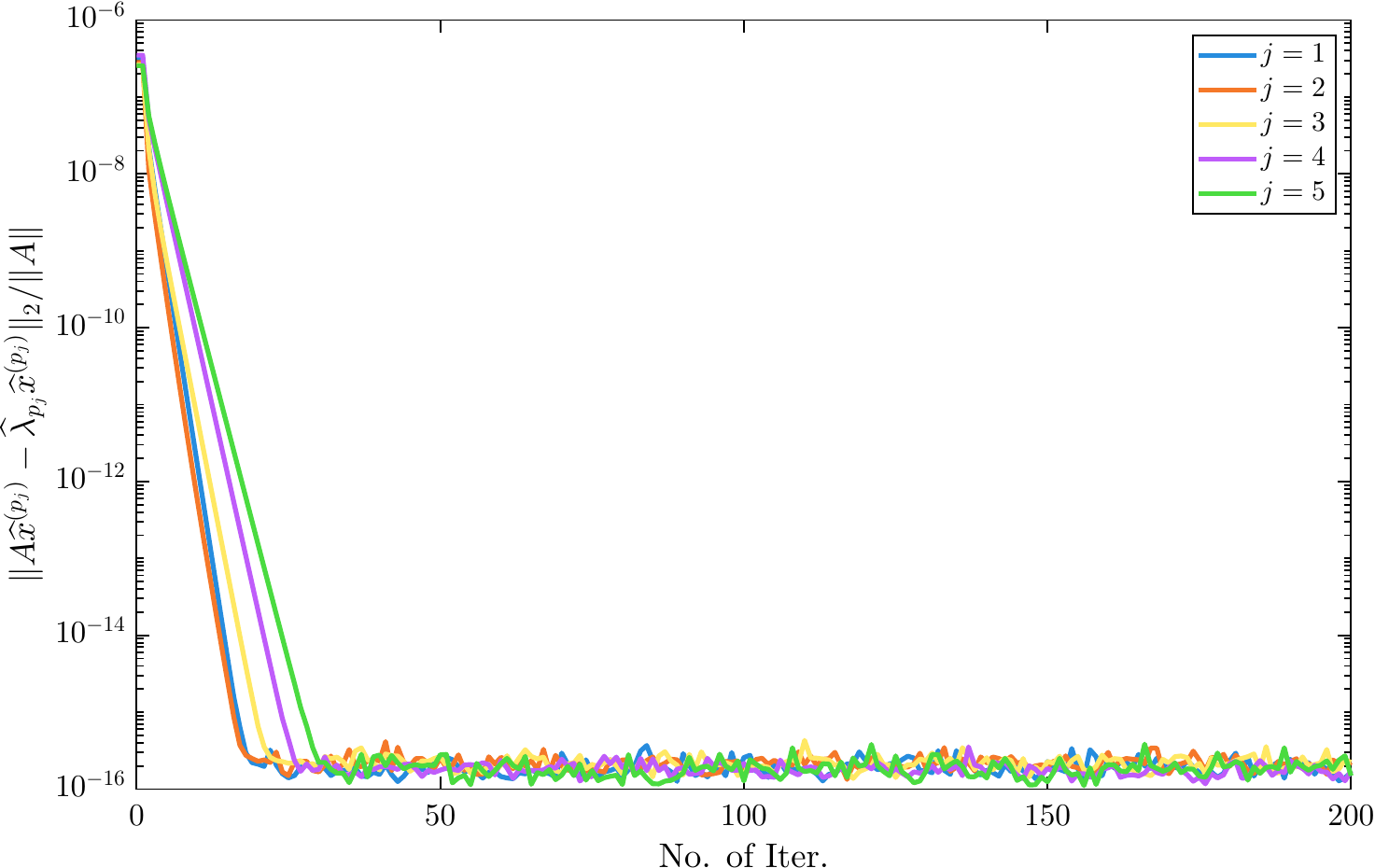}} &
        \subfloat[$K=20$, \texttt{mode=3}]{\includegraphics[width=0.28\textwidth]{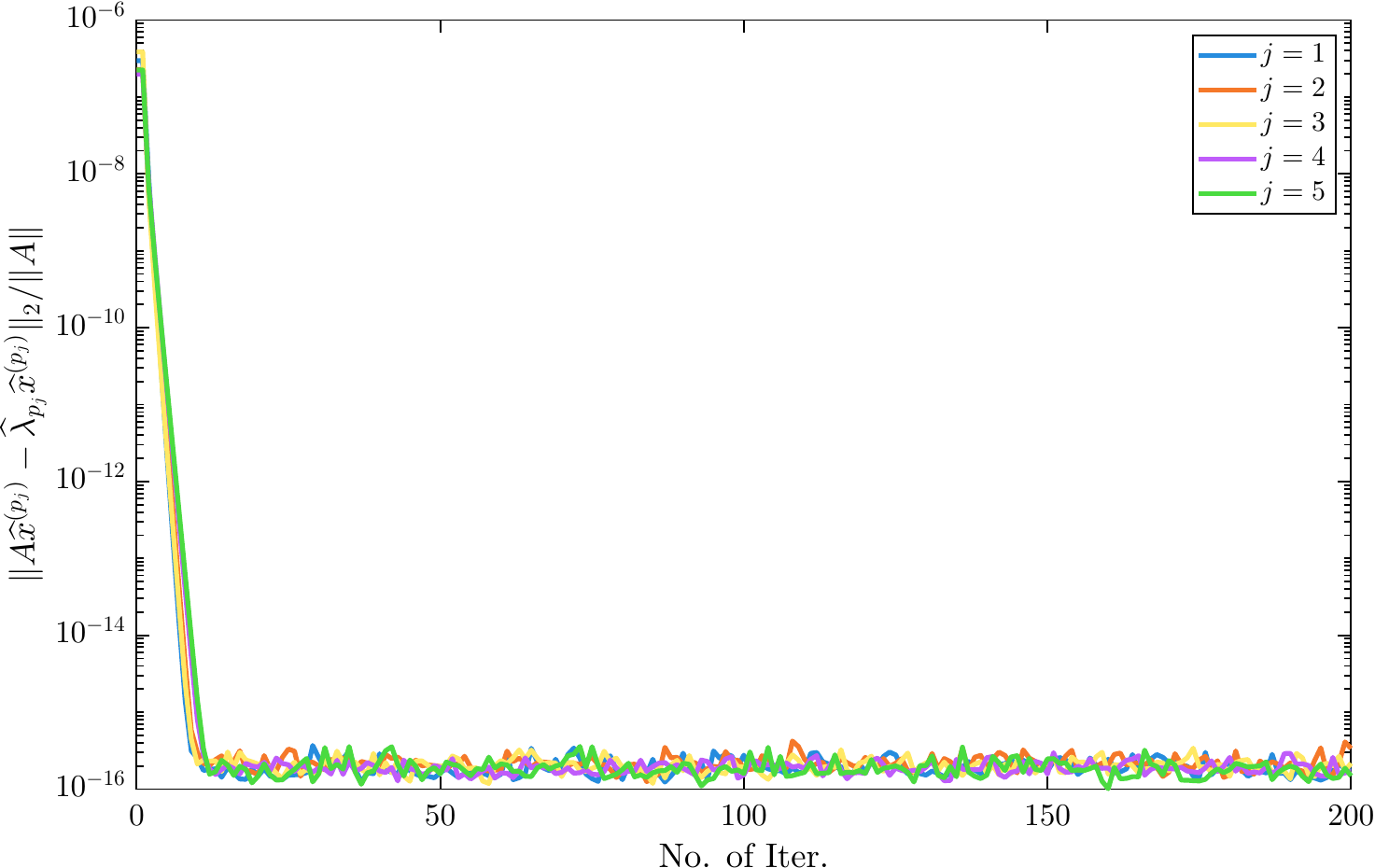}} \\
        \subfloat[$K=5$, \texttt{mode=4}]{\includegraphics[width=0.28\textwidth]{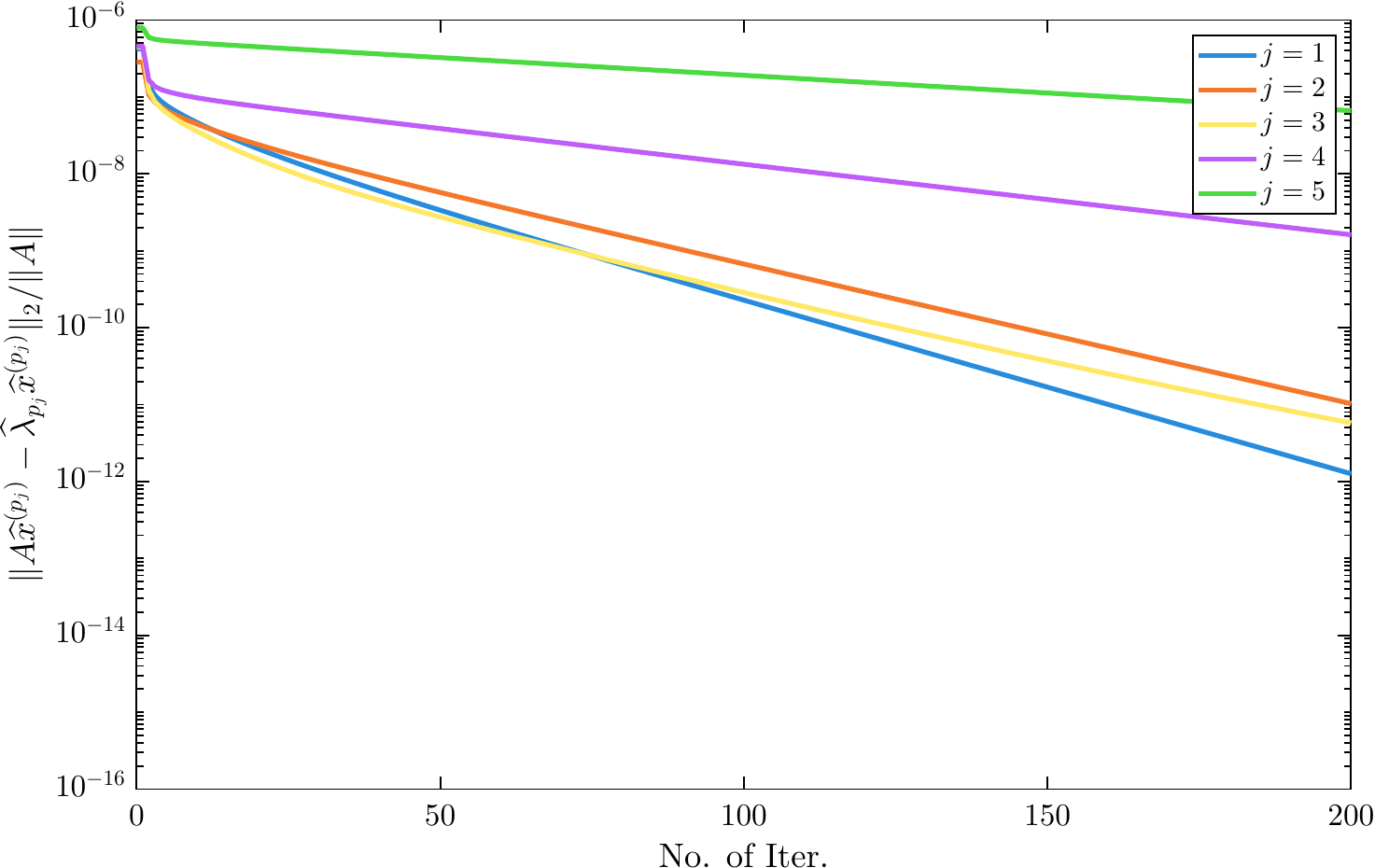}} &
        \subfloat[$K=10$, \texttt{mode=4}]{\includegraphics[width=0.28\textwidth]{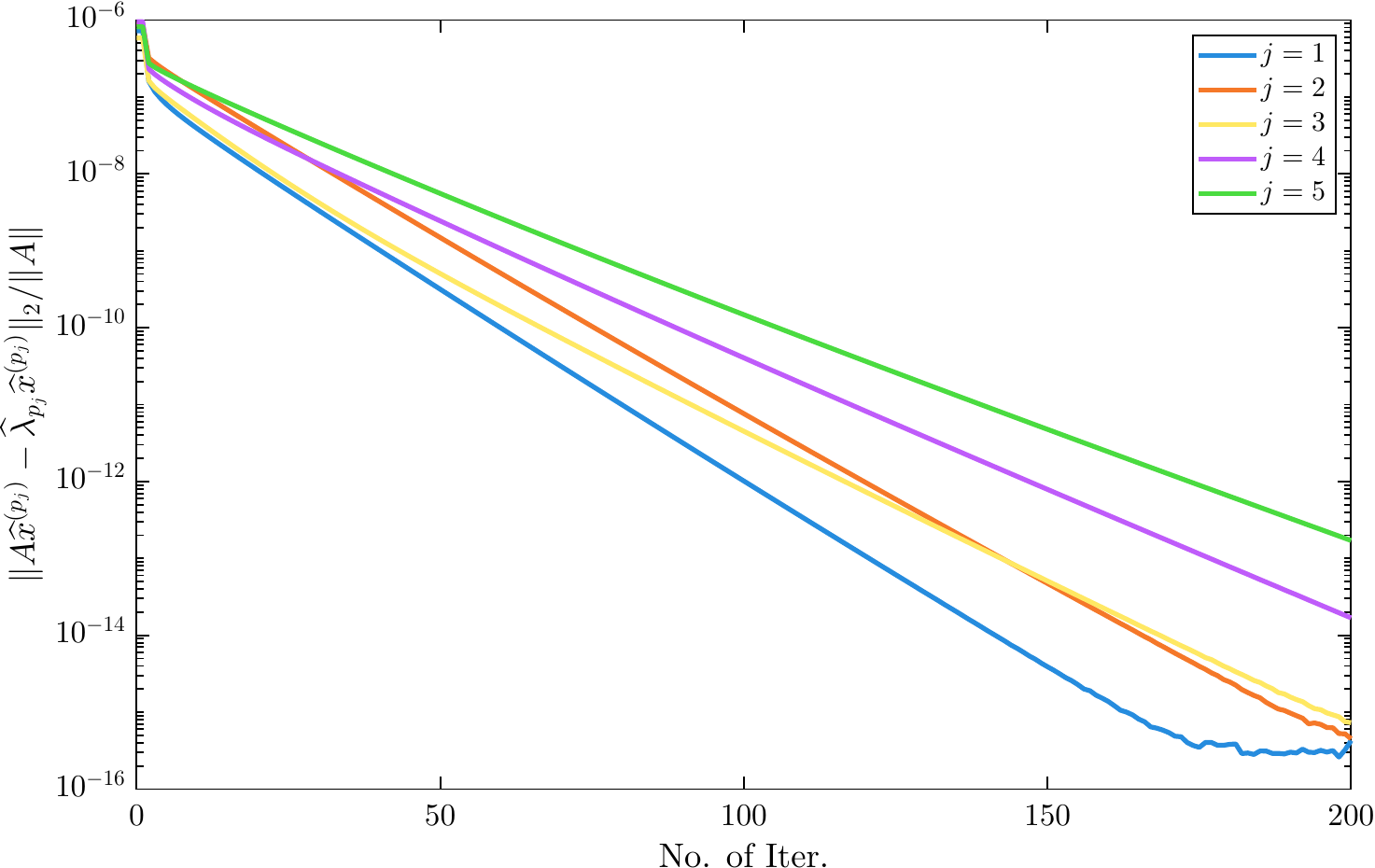}} &
        \subfloat[$K=20$, \texttt{mode=4}]{\includegraphics[width=0.28\textwidth]{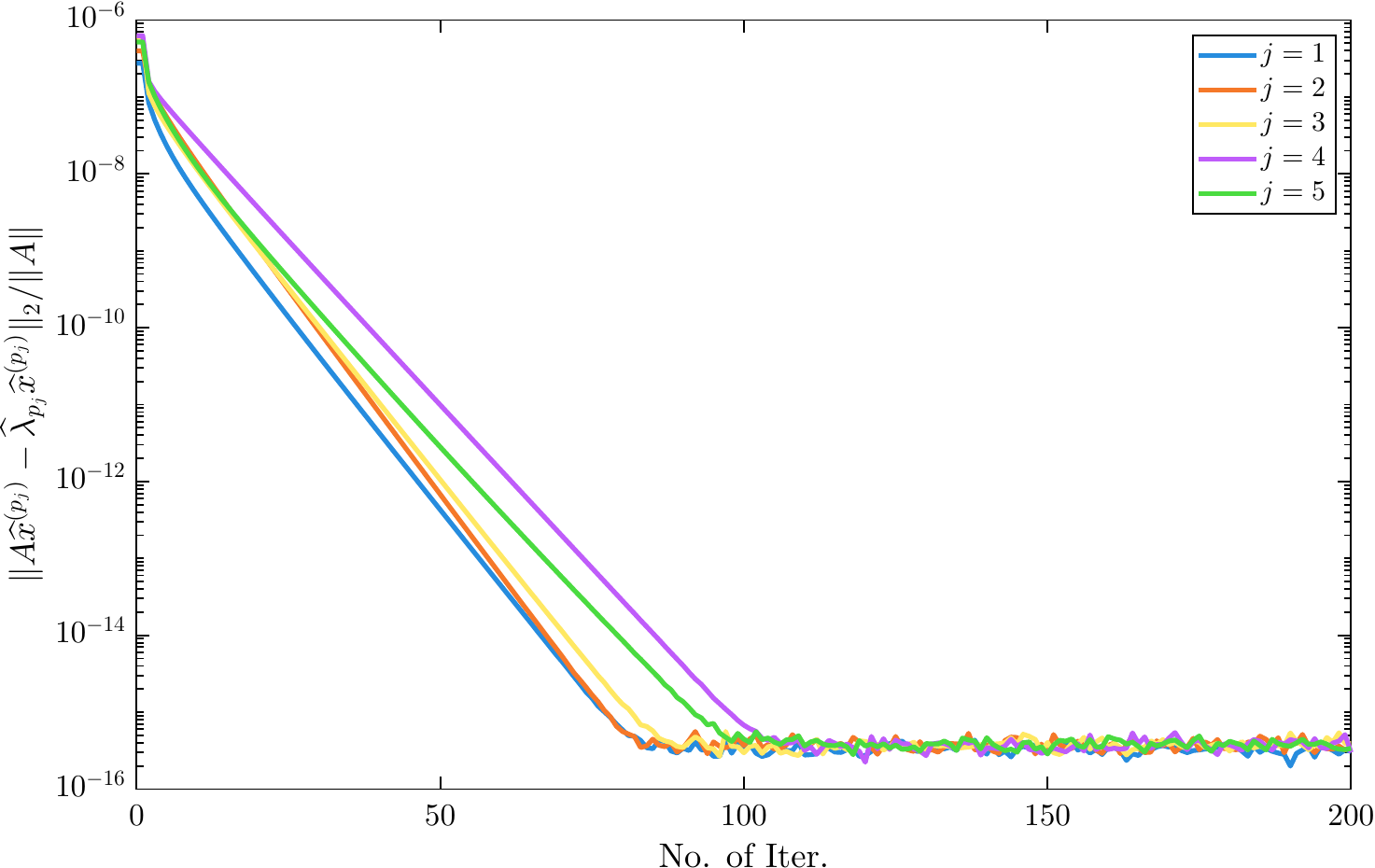}} \\
    \end{tabular}
    \caption{Convergence history of the relative eigenpair residual $\|A\widehat x^{(p_j)}-\widehat \lambda_{p_j}\widehat x^{(p_j)}\|_2/\|A\|$ for $j=1,\dots,5$ (\texttt{n=100}). Each panel is labeled with $(K,\texttt{mode})$.}
    \label{fig:sym:conv:mode4}
\end{figure}

In these dense-matrix experiments, when the refinement converges we observe that the eigenpair residuals are reduced to the level of double precision:
\begin{align}
    \|A\widehat x^{(p_j)}-\widehat \lambda_{p_j}\widehat x^{(p_j)}\|_2 = \mathcal{O}(u)\|A\|,
    \qquad
    u=2^{-53}.
\end{align}
This suggests that, for sufficiently well-separated eigenvalues, the attainable accuracy is ultimately limited by double-precision rounding.

To assess eigenvector forward error on a problem where a reference is available, we consider a smaller dense problem with \texttt{n=500} and compute a reference invariant subspace using \texttt{eig} in double precision.
Table~\ref{tab:dense_forward_error} reports the relative residual norm and $\sin\Theta$, where $\Theta$ is the largest principal angle between the computed and reference $k$-dimensional invariant subspaces.
We define $N_{\mathrm{match}}$ as the first iteration index at which the refined relative residual falls below the double-precision \texttt{eigs} residual level.
For the well-separated case (\texttt{mode=3}), refinement reaches the double-precision residual level in a moderate number of iterations and substantially decreases $\sin\Theta$.
For the more clustered case (\texttt{mode=4}), refinement can still attain the double-precision residual level, but it requires significantly more iterations, illustrating the slower convergence expected when the target eigenvalues are tightly clustered.

\begin{table}[htbp]
\centering
\small
\caption{Forward-error assessment on a small dense test problem (\texttt{n=500}, $k=5$). ``Single'' and ``double'' refer to the \texttt{eigs} outputs in single and double precision; refinement is run until the refined relative residual reaches the double-precision \texttt{eigs} residual level (after $N_{\mathrm{match}}$ iterations). The reference invariant subspace is computed by \texttt{eig} in double precision.}
\label{tab:dense_forward_error}
\begin{tabular}{lrrrrrrr}
\toprule
 & & \multicolumn{3}{c}{Relative residual} & \multicolumn{3}{c}{$\sin\Theta$}\\\cmidrule(lr){3-5}\cmidrule(lr){6-8}
Mode & $N_{\mathrm{match}}$ & single & refined & double & single & refined & double\\
\midrule
\texttt{mode=3} & 760 & 6.85e-07 & 2.29e-15 & 2.36e-15 & 5.13e-06 & 9.54e-14 & 1.29e-14\\
\texttt{mode=4} & 7227 & 1.66e-06 & 4.41e-15 & 4.45e-15 & 4.89e-05 & 2.04e-12 & 1.29e-13\\
\bottomrule
\end{tabular}
\end{table}

\subsection{Sparse matrices}

We report results for several matrices from the SuiteSparse collection \cite{davis2011uf}.
Table~\ref{tab:sparse1} summarizes the results for five matrices.
We declare convergence when
\begin{align}
    \|\widetilde E_L\|_F\leq 10^{-8}
\end{align}
is satisfied.

All sparse experiments reported in this section were performed \emph{without} preconditioning unless otherwise noted.

\begin{table}[htbp]
\centering
\caption{Numerical results for matrices from the SuiteSparse Matrix Collection ($k=5$, $K=10$, no Rayleigh--Ritz preprocessing). The iteration count is the number of refinement steps; for \texttt{bmwcra\_1}, refinement was terminated early due to divergence.}
\label{tab:sparse1}
\begin{tabular}{lrrrrr}
\toprule
Matrix & Size & nnz & Iterations & $\|\widetilde E_L^{(0)}\|_F$ & $\|\widetilde E_L^{(\mathrm{end})}\|_F$ \\
\midrule
\texttt{bcsstk13} &
2,003 & 83,883 & 6 & 3.49e-05 & 5.37e-09\\
\texttt{bmwcra\_1} &
148,770 & 10,641,602 & 2 & 7.57e-01 & 1.03e+01\\
\texttt{gearbox} &
153,746 & 9,080,404 & 272 & 9.40e-04 & 9.90e-09\\
\texttt{nd3k} &
9,000 & 3,279,690 & 204 & 7.05e-04 & 9.96e-09\\
\texttt{shipsec5} &
179,860 & 4,598,604 & 120 & 9.45e-04 & 9.72e-09\\
\bottomrule
\end{tabular}
\end{table}

To provide a rough runtime comparison, Table~\ref{tab:sparse_timing5} reports wall-clock times of \texttt{eigs} in single and double precision and the additional time of refinement starting from the single-precision output until the refined relative residual matches the double-precision \texttt{eigs} residual level.
Unlike Table~\ref{tab:sparse1}, which reports iteration counts under the correction-size stopping criterion, Table~\ref{tab:sparse_timing5} uses this residual-matching criterion to define $N_{\mathrm{match}}$.
These timings are indicative only and depend on the implementation and hardware.
For \texttt{bmwcra\_1} without preprocessing, the refinement experienced numerical breakdown after only a few iterations (before reaching the double-precision residual level), so $N_{\mathrm{match}}$ is not defined and the reported refinement time corresponds to this early termination.

\begin{table}[htbp]
\centering
\small
\caption{Timing comparison for the five SuiteSparse matrices in Table~\ref{tab:sparse1} ($k=5$, $K=10$). $N_{\mathrm{match}}$ denotes the first iteration index at which the refined relative residual falls below the double-precision \texttt{eigs} residual level. Times are in seconds.}
\label{tab:sparse_timing5}
\begingroup
\begin{tabular}{lrrrrr}
\toprule
Matrix &
$t_{\mathrm{eigs}}^{\mathrm{single}}$ &
$t_{\mathrm{eigs}}^{\mathrm{double}}$ &
$N_{\mathrm{match}}$ &
$t_{\mathrm{ref}}$ &
$t_{\mathrm{eigs}}^{\mathrm{single}}+t_{\mathrm{ref}}$\\
\midrule
\texttt{bcsstk13}  & 0.020 & 0.204 & 26   & 0.030  & 0.049\\
\texttt{bmwcra\_1} & 0.746 & 1.918 & --   & 0.109  & 0.855\\
\texttt{gearbox}   & 0.925 & 2.590 & 1230 & 27.333 & 28.259\\
\texttt{nd3k}      & 0.197 & 0.626 & 1136 & 5.168  & 5.365\\
\texttt{shipsec5}  & 0.211 & 0.694 & 629  & 11.779 & 11.989\\
\bottomrule
\end{tabular}
\endgroup
\end{table}

From Table~\ref{tab:sparse1}, convergence was observed for all matrices except \texttt{bmwcra\_1}.
The iteration counts are problem-dependent and should be interpreted only as indicative reference values, since they depend strongly on the choices of $k$ and $K$, the stopping criterion, and implementation details.

The failure of \texttt{bmwcra\_1} is consistent with the theory: the target eigenvalues are tightly clustered and the initial approximation is not sufficiently accurate, so the refinement can stagnate or diverge.
In our run, the correction norm grew quickly and the refinement was terminated after only a few iterations.
Table~\ref{tab:sparse1_precond} shows that applying a preprocessing step can restore convergence for this case.
This suggests that the preprocessing step can be effective even when the initial approximation is poor or when clustered eigenvalues are present.

\begin{table}[htbp]
\centering
\caption{Numerical results for \texttt{bmwcra\_1} with preprocessing ($k=5$, $K=10$).}
\label{tab:sparse1_precond}
\begin{tabular}{lrrrrr}
\toprule
Matrix & Size & nnz & Iterations & $\|\widetilde E_L^{(0)}\|_F$ & $\|\widetilde E_L^{(\mathrm{end})}\|_F$ \\
\midrule
\texttt{bmwcra\_1} &
148,770 & 10,641,602 & 572 & 7.57e-01 & 9.99e-09\\
\bottomrule
\end{tabular}
\end{table}

\subsubsection{Limitations and practical guidance}

The refinement becomes challenging when the target eigenvalues are tightly clustered, i.e., when the minimal separation
\[
\min_{1\le i\ne j\le k}|\lambda_{p_j}-\lambda_{p_i}|
\]
decreases and when the ratio $\max_{k+1\le j\le n} |\lambda_{p_j}|/\min_{1\le i\le k} |\lambda_{p_i}|$ approaches one.
In this regime, the Rayleigh quotient differences can be comparable to rounding errors and refinement may stagnate or diverge.
In such cases, it is often more appropriate to assess accuracy in terms of invariant subspaces rather than individual eigenvector bases.
Empirically, for \texttt{bmwcra\_1} a preprocessing step based on Rayleigh--Ritz equations improved the diagonality and orthogonality of the approximation and restored convergence.
In practice, a simple and robust workflow is to (i) compute a low-precision initial approximation, (ii) apply Rayleigh--Ritz preprocessing on the clustered subset when needed, and (iii) run the refinement with a correction-size stopping criterion.
If $\|\widetilde E_L\|_F$ grows rapidly in the first few iterations, it is often preferable to terminate and switch to preprocessing rather than continuing the refinement.

\begin{figure}[htbp]
    \centering
    \subfloat[\texttt{gearbox}]{
        \begin{minipage}[b]{0.45\textwidth}
            \centering
            \includegraphics[width=\linewidth]{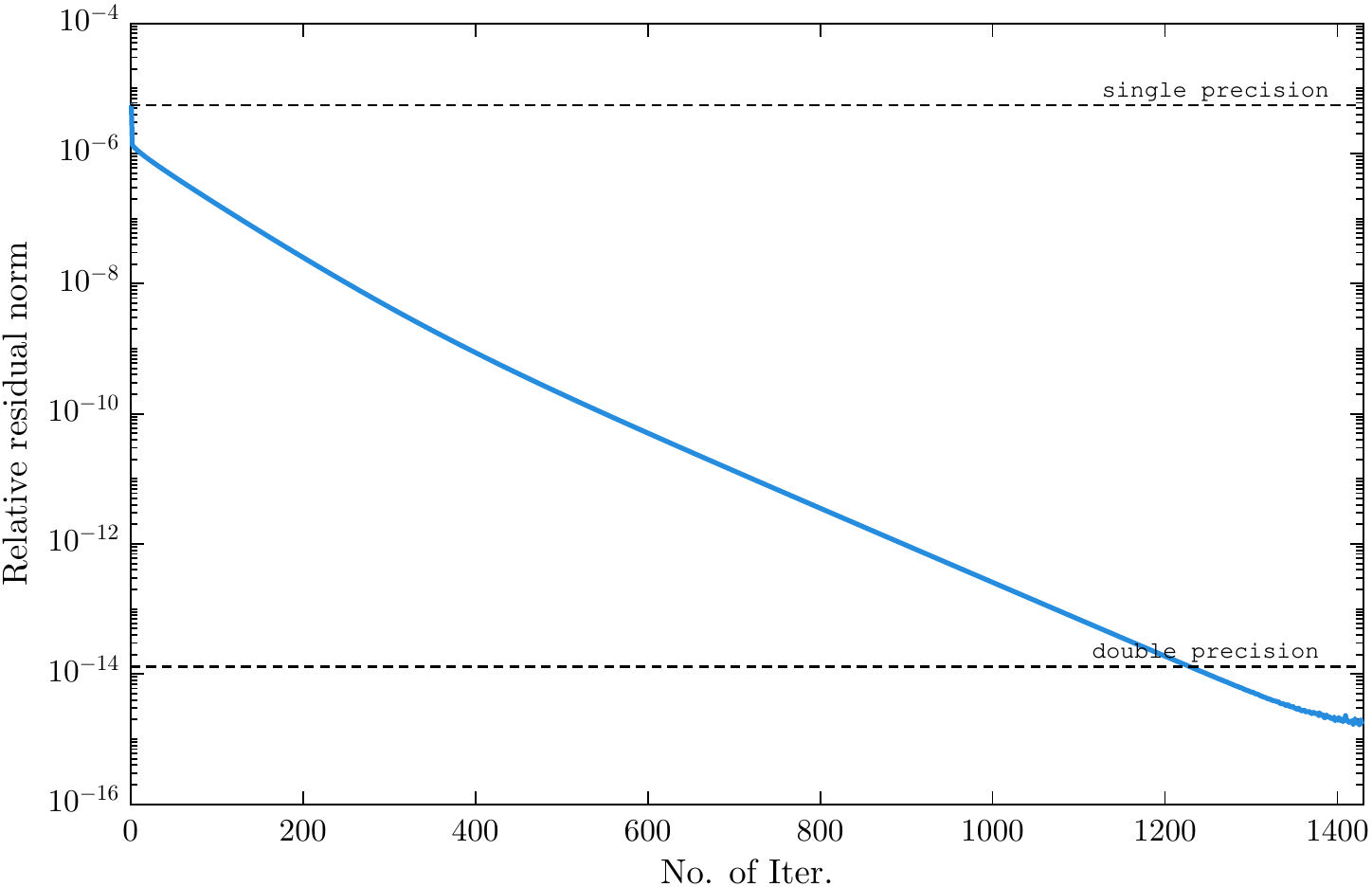}
        \end{minipage}
    }
    \hfill
    \subfloat[\texttt{shipsec5}]{
        \begin{minipage}[b]{0.45\textwidth}
            \centering
            \includegraphics[width=\linewidth]{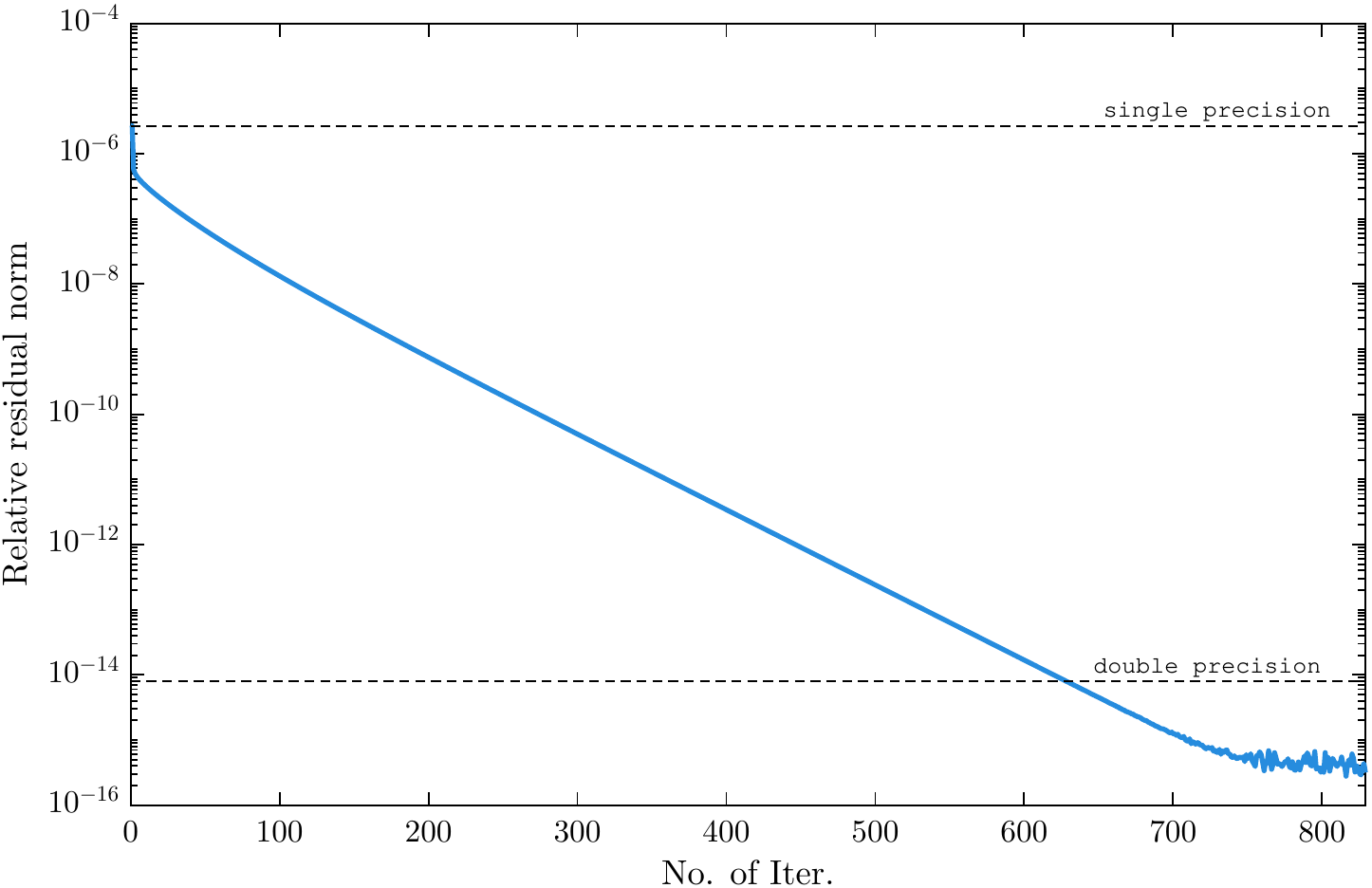}
        \end{minipage}
    }
    \caption{Convergence history of the relative residual norm $\|A\widehat X-\widehat X\widetilde D_1\|_F/\|A\|$ with $k=5$ and $K=10$. Dashed horizontal lines indicate the residual levels of the initial single-precision and double-precision \texttt{eigs} outputs.}
    \label{fig:conv:sparse}
\end{figure}

Figure~\ref{fig:conv:sparse} shows the convergence history of Algorithm~\ref{alg:proposed} for two representative matrices.
In both cases, the residual decreases steadily over the refinement iterations.
In practical runs, the refinement can be terminated once the correction-size criterion is satisfied (as in Table~\ref{tab:sparse1}); in Figure~\ref{fig:conv:sparse}, we plot the residual history over many refinement steps to visualize the long-term behavior.
Since the stopping criterion is user-controlled, the refinement can also be terminated early to trade cost for accuracy and obtain eigenvectors whose residuals lie between those returned by the initial single-precision solve and those obtained after full refinement.

\section{Variants of the Proposed Method}\label{sec:variants}

So far, we have considered a permutation $p$ such that, for the eigenvalues $\lambda_i$ of a real symmetric matrix $A$,
\begin{align}
    \max_{k+1\leq i\leq n}|\lambda_{p_i}|<\min_{1\leq j\leq k}|\lambda_{p_j}|
    %|\lambda_{p_1}|\geq |\lambda_{p_2}|\geq\dots\geq |\lambda_{p_n}|
\end{align}
holds.
This idea can be extended to other orderings and related problems.

\subsection{Smallest eigenvalues (and similarly largest eigenvalues)}

Let $\{p_i\}=\{1,\dots,n\}$ be defined by
\begin{align}
    \min_{k+1\leq i\leq n}\lambda_{p_i}>\max_{1\leq j\leq k}\lambda_{p_j}
    %\lambda_{p_1}\leq \lambda_{p_2}\leq \dots\leq \lambda_{p_n}
\end{align}
and consider iterative refinement for an approximation of $X=(x^{(p_1)},x^{(p_2)},\dots,x^{(p_k)})$.

We apply the proposed method to the shifted matrix $B=A-\alpha I$.
The eigenvalues of $B$ are $\lambda_i-\alpha$, and the corresponding eigenvectors are $x^{(i)}$.
If $\alpha$ is chosen such that
\begin{align}
    \max_{k+1\leq j\leq n}|\lambda_{p_j}-\alpha| < \min_{1\leq i\leq k}|\lambda_{p_i}-\alpha|
\end{align}
then the proposed method applied to $B$ satisfies the convergence condition.

A practical choice of $\alpha$ is to first compute an approximation $\widehat \lambda_{p_{k+1}}$ of $\lambda_{p_{k+1}}$ and set
\begin{align}
    (\|A\|+\widehat \lambda_{p_{k+1}})/2\leq (\|A\|_{\infty}+\widehat \lambda_{p_{k+1}})/2=\alpha.
\end{align}

Iterative refinement for eigenvectors associated with the $k$ largest eigenvalues can be handled similarly by applying an appropriate positive shift.

\subsection{Smallest eigenvalues with absolute values}

Let $\{p_i\}=\{1,\dots,n\}$ be defined by
\begin{align}
    \min_{k+1\leq i\leq n}|\lambda_{p_i}|>\max_{1\leq j\leq k}|\lambda_{p_j}|
    %|\lambda_{p_1}|\leq |\lambda_{p_2}|\leq\dots\leq |\lambda_{p_n}|
\end{align}
and consider iterative refinement for an approximation of $X=(x^{(p_1)},x^{(p_2)},\dots,x^{(p_k)})$.

We apply the proposed method to the shifted squared matrix $B=A^2-\alpha I$.
The eigenvalues of $B$ are $\lambda_i^2-\alpha$, and the corresponding eigenvectors are $x^{(i)}$.
If $\alpha$ is chosen such that
\begin{align}
    \max_{k+1\leq j\leq n}|\lambda_{p_j}^2-\alpha| < \min_{1\leq i\leq k}|\lambda_{p_i}^2-\alpha|
\end{align}
then the proposed method applied to $B$ satisfies the convergence condition.

A practical choice of $\alpha$ is to first compute an approximation $\widehat \lambda_{p_{k+1}}$ of $\lambda_{p_{k+1}}$ and set
\begin{align}
    (\|A\|^2+|\widehat \lambda_{p_{k+1}}|)/2\leq (\|A\|_{\infty}^2+|\widehat \lambda_{p_{k+1}}|)/2=\alpha.
\end{align}

Note that it is unnecessary to form $B=A^2-\alpha I$ explicitly.
In the proposed method, it suffices to compute the product $A^2\widehat X=A(A\widehat X)$, which requires no additional storage for $A^2$.
\section{Conclusion}\label{sec:conclusion}
We proposed an iterative refinement method that improves the accuracy of a selected subset of eigenvectors of a real symmetric matrix while using only $\mathcal{O}(nk)$ storage.
By representing the underlying orthogonal matrix in compact WY form, the refinement step can be implemented without forming dense $n\times n$ factors explicitly, and its dominant operations are matrix--matrix multiplications together with small $k\times k$ updates.
We provided an error-block analysis and established linear convergence under an eigenvalue separation condition, clarifying how convergence degrades as the target eigenvalues become more tightly clustered.
Numerical experiments on dense test problems and sparse matrices from the SuiteSparse Matrix Collection illustrated the attainable accuracy and the problem-dependent convergence behavior, and demonstrated that Rayleigh--Ritz preprocessing can restore convergence in a representative clustered case.
We also presented a conservative sufficient condition that helps delineate difficult instances (e.g., poor initial approximations).
In addition, we outlined practical variants (via shifting) that extend the separation-based analysis to other extremal parts of the spectrum.

Several directions remain for future work.
These include a performance-oriented implementation that exploits optimized level-3 BLAS and accelerator kernels, a systematic study of mixed-precision variants, and refined finite-precision analysis and stopping criteria that better capture the behavior in tightly clustered regimes.

\section*{Acknowledgments}

This work was supported by JSPS KAKENHI Grant Numbers 23K28100 and 25H00449.

\bibliographystyle{siamplain}
\bibliography{references}

\end{document}